\theoremstyle{plain}
\newtheorem{lemma}{Lemma}[section]
\newtheorem{theorem}[lemma]{Theorem}
\newtheorem{proposition}[lemma]{Proposition}
\newtheorem{corollary}[lemma]{Corollary}
\theoremstyle{definition}
\newtheorem{definition}[lemma]{Definition}
\newtheorem{remark}[lemma]{Remark}
\numberwithin{equation}{section}
\newcommand{\dom}{\textrm{Dom\,}}
\newcommand{\R}{\mathbb{R}}
\newcommand{\N}{\mathbb{N}}
\newcommand{\Q}{\mathbb{Q}}
\newcommand{\supp}{\text{\rm supp}}
\newcommand{\diam}{{\rm diam\,}}
\newcommand{\ve}{\varepsilon}
\newcommand{\erre}{\mathbb{R}}
\newcommand{\f}{\varphi}
\renewcommand{\P}{\mathcal{P}}
\newcommand{\E}{\mathcal{E}}
\newcommand{\F}{\mathcal{F}}
\renewcommand{\P}{\mathcal{P}}
\newcommand{\RCD}{\mathsf{RCD}}
\newcommand{\CD}{\mathsf{CD}}
\newcommand{\Geo}{{\rm Geo}}
\newcommand{\MCP}{\mathsf{MCP}}
\newcommand{\mm}{\mathfrak m}
\newcommand{\sfd}{\mathsf d}
\newcommand{\Opt}{\mathrm{OptGeo}}
\newcommand{\ee}{{\rm e}}
\begin{document}

\title[Optimal maps in essentially non-branching spaces]{Optimal maps in essentially non-branching spaces}

\author{Fabio Cavalletti}\thanks{F. Cavalletti:  Universit\`a degli Studi di Pavia, Dipartimento di Matematica, email: fabio.cavalletti@unipv.it} 
\author{ Andrea Mondino} \thanks{A. Mondino: The University of Warwick,  Department of Mathematics.  email: A.Mondino@warwick.ac.uk}
%
%\address{Centro de Giorgi - SNS}
%\email{fabio.cavalletti@sns.it}

\keywords{optimal transport; existence of maps; uniqueness of maps; measure contraction property}
\bibliographystyle{plain}

\begin{abstract}
In this note we prove that in a metric measure space $(X,\sfd,\mm)$ verifying the measure contraction property with parameters $K \in \R$ and 
$1< N< \infty$, any optimal transference plan between two marginal measures is induced by an optimal map,  provided the first marginal is absolutely continuous with respect to $\mm$ and the space itself is essentially non-branching. In particular this shows that there exists a unique transport plan and it is induced by a map. 
\end{abstract}

\maketitle
%\tableofcontents

\section{Introduction}

One of the first questions of Optimal Transportation theory goes as follows: given two probability measures over a common space 
and a cost function, what is the optimal manner, with respect to this cost, to transport one measure into the other measure? 
This question can be made precise, for instance, by taking as a common space a complete and separable metric space $(X,\sfd)$ and $\sfd^{2}$ as cost function; 
then the optimal transport problem becomes: denoting $\mathcal{P}(X)$ the space of Borel probability measures over $X$ and given $\mu_{0}, \mu_{1} \in \mathcal{P}(X)$, called marginal measures, 
study 
\begin{equation}\label{eq:minW2}
\min_{\pi \in \Pi(\mu_{0},\mu_{1})} \int_{X \times X} \sfd^{2}(x,y)\, \pi(dxdy), 
\end{equation}
where the set of optimal transport plans is defined as follows
$$
\Pi(\mu_{0},\mu_{1}) : = \big\{ \pi \in \mathcal{P}(X\times X) \colon (P_{1})_{\sharp} \pi = \mu_{0}, (P_{2})_{\sharp} \pi = \mu_{1} \big\},
$$
and $P_{i} : X\times X \to X$ denotes the projection on the $i$-th component, for $i = 1,2$. The natural question then is whether the optimal transport plan is induced 
by a transport map or not, i.e. if there exists
$$
T : \dom(T) \subset X \to X, \qquad T_{\sharp}\mu_{0}= \mu_{1},
$$
such that $(Id,T)_{\sharp}\mu_{0} \in \Pi(\mu_{0},\mu_{1})$ is an optimal transport plan, i.e. it is a minimiser in \eqref{eq:minW2}. 

One can easily find examples where such an optimal map cannot exist: if $\mu_{0} = \delta_{o}$ for some $o \in X$ and $\mu_{1}$ is not a Dirac mass, 
then no optimal transport map exists. To avoid such situation, a typical trick is to introduce a reference Radon measure $\mm$ over $X$ such that the metric measure space 
$(X,\sfd,\mm)$ enjoys some ``regularity'' and assume $\mu_{0} \ll \mm$. 
Then metric measure spaces look like a natural framework for proving existence and uniqueness of optimal transport maps.

The problem has a long bibliography.
Existence and uniqueness of optimal transport maps was first proved in the Euclidean setting by Brenier \cite{Brenier} 
under the assumption that the first marginal is absolutely continuous with respect to the Lebesgue measure 
and later extended to more general marginal measures by McCann \cite{McCann1}.
Since then there have been many generalisations; 
most relevant in the context of this paper is the result of McCann \cite{McCann2} for Riemannian
manifolds. 

In the framework of sub-Riemannian manifolds, existence and uniqueness of optimal transport maps  have been established 
by Ambrosio-Rigot \cite{AR} on the Heisenberg group and by Figalli-Rifford \cite{FiRi} under the assumption that the distance 
is locally Lipschitz (or locally semi-concave) outside of the diagonal. For general sub-Riemannian manifolds it seems to be still an open problem. %A large class of interesting geometric structures which satisfy $\MCP$ but not $\CD$ are the sub-Riemannian manifolds, 
%see for instance \cite{Juillet2009},\cite{Rifford2013},\cite{Rizzi2016}. 

For more general metric measure spaces, existence and uniqueness of an optimal transport  map has been 
obtained imposing some type of curvature bounds from below and/or a nice behaviour of the geodesic of the space.
In particular we mention the results by 
\begin{itemize}
\item[-] Bertrand \cite{Bertrand} for Alexandrov spaces;
\item[-] Gigli \cite{GigliGAFA} under the assumption that $(X,\sfd,\mm)$ is a \emph{non-branching} metric measure space satisfying $\CD(K,N)$ (or $\CD(K,\infty)$ under the extra assumption that $\mu_{0}$ is in the domain of the Shannon Entropy); 
\item[-] by the first  author and M. Huesmann \cite{cavahues:existence} assuming $(X,\sfd,\mm)$ to be non-branching 
and $\mm$ to verify a weak property concerning the behaviour of $\mm$ under the shrinking of sets to points. 
This in particular covers non-branching spaces satisfying $\MCP$. Also the cost function could be of the form $h \circ \sfd$ for any increasing and strictly convex $h$;
\item[-]  Rajala and Sturm  \cite{RS2014} under the assumption that $(X,\sfd,\mm)$ satisfies the strong $\CD(K,\infty)$-condition and both $\mu_{0}$ and $\mu_{1}$ are absolutely continuous with respect to $\mm$;
\item[-] Gigli, Rajala and Sturm \cite{GRS2013} for $\RCD(K,N)$ spaces and $\mu_{0}$ absolutely continuous.
\end{itemize}

\noindent
Without the non-branching assumption we cannot expect to have existence and uniqueness of optimal maps, even assuming a lower curvature bound.  In particular in \cite{KR} the authors
construct a branching space satisfying $\MCP(0,3)$ and absolutely continuous marginal measures $\mu_{0}$ and $\mu_{1}$ such that the optimal plan is 
not induced by any map. 
It is then natural to investigate whether a weaker variant of the non-branching condition, namely \emph{essentially non-branching} (see Definition \ref{def:essNB}), 
is enough to obtain existence and uniqueness of optimal maps under curvature bounds. 
The goal of the present paper is to answer affirmatively to such a question. 
Before stating the main result let 
us recall that  an optimal dynamical plan  $\nu\in \Opt(\mu_{0},\mu_{1})$ is given by the map $G:X\to \Geo(X)$ if $\nu=G_{\sharp} \mu_{0}$, 
for all the notations see Section \ref{S:prel}.

\begin{theorem}\label{T:final}
Let $(X,\sfd,\mm)$ be an essentially non-branching metric measure space verifying $\MCP(K,N)$. If $\mu_{0}, \mu_{1} \in \P_{2}(X)$ with $\mu_{0}=\rho_{0} \mm \ll \mm$,  then there exists a unique $\nu \in \Opt(\mu_{0},\mu_{1})$; such a unique   $\nu \in \Opt(\mu_{0},\mu_{1})$ is given by a map and  it satisfies  $(\ee_{t})_{\sharp} \nu=\rho_{t} \mm \ll \mm$ for any $t \in [0,1)$ and  the  $\MCP(K,N)$-inequality 
\begin{equation}\label{E:MCPFinal}
\int \rho_{t}^{1-1/N} \, \mm \geq \int \tau_{K,N}^{(1-t)} (\sfd(x,\ee_{1}(S(x)))) \rho_{0}^{1-1/N} \mm(dx),  \quad  \forall t \in [0,1),
\end{equation}
where $S$ is the unique map giving $\nu$.
 \\ Moreover if $\mu_{0}$ and $\mu_{1}$ have bounded support and $\rho_{0}$ is $\mm$-essentially bounded then 
\begin{align}\label{eq:rhoinfty}
\| \rho_{t}\|_{L^{\infty}(X,\mm)} &\leq \frac{1}{(1-t)^{N}}e^{Dt\sqrt{(N-1)K^{-}}} \| \rho_{0} \|_{L^{\infty}(X,\mm)},  \quad 	\forall t \in [0,1), 
%\E_{N}(\mu_{t}) &\geq (1-t) e^{ - D t  \frac{\sqrt{ (N-1)K^{-}}}{N} } \E_{N}(\mu_{0}),  \quad 	\forall t \in [0,1),
\end{align}
where $D = \diam(\supp (\mu_{0}) \cup \supp (\mu_{1}))$ and $K^{-} = \max\{-K,0\}$.
\end{theorem}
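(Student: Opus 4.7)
The plan follows the blueprint established by the first author and Huesmann in the non-branching $\MCP$ setting, adapting the key comparison step to essentially non-branching spaces. Starting from an arbitrary $\nu \in \Opt(\mu_{0},\mu_{1})$, the aim is first to show that $(\ee_{t})_{\sharp}\nu \ll \mm$ for every $t \in [0,1)$ with a quantitative pointwise density bound, then to deduce the map structure and uniqueness from essential non-branching and a convex-combination argument, and finally to derive \eqref{E:MCPFinal} and \eqref{eq:rhoinfty} as integrated consequences.

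\textbf{Step 1: pointwise density estimate.} Disintegrate $\nu$ along the final endpoint: $\nu = \int_{X} \nu^{y}\, d\mu_{1}(y)$, with $\nu^{y}$ concentrated on geodesics arriving at $y$. After localising on Borel sets $A$ where $\ee_{0}(\gamma) \in A$ and $0 < \mm(A) < \infty$, apply $\MCP(K,N)$ to produce a contraction plan from $\delta_{y}$ to $(\mm(A))^{-1}\mm|_{A}$ carrying the $\tau_{K,N}^{(1-t)}$-distortion estimate on its $t$-marginal. On a $\nu$-full-measure set of geodesics, essential non-branching forces the $\MCP$-contraction plan and the appropriately rescaled piece of $\nu^{y}$ to be carried by the same bundle of non-branching geodesics; this upgrades the $\MCP$ bound to the pointwise inequality
\begin{equation*}
\rho_{t}(\ee_{t}(\gamma))\, \tau_{K,N}^{(1-t)}\bigl(\sfd(\ee_{0}(\gamma),\ee_{1}(\gamma))\bigr)^{N} \leq \rho_{0}(\ee_{0}(\gamma))
\end{equation*}
for $\nu$-a.e.\ $\gamma$ and every $t \in [0,1)$, and hence $(\ee_{t})_{\sharp}\nu = \rho_{t}\mm$ is absolutely continuous.

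\textbf{Step 2: map structure and uniqueness.} Since both $(\ee_{0})_{\sharp}\nu$ and $(\ee_{t})_{\sharp}\nu$ are absolutely continuous for every $t \in (0,1)$, essential non-branching yields that $\nu$ is concentrated on a Borel set of pairwise non-branching geodesics; in particular $\ee_{1}$ is a Borel function of $\ee_{0}$ on $\supp \nu$, which produces a Borel map $S : X \to \Geo(X)$ with $\nu = S_{\sharp}\mu_{0}$. For uniqueness, given $\nu_{1},\nu_{2} \in \Opt(\mu_{0},\mu_{1})$, the convex combination $(\nu_{1}+\nu_{2})/2$ is again optimal, hence induced by a single map by the argument just given; but a plan induced by a map that splits as a convex combination of plans from $\mu_{0}$ must have its two summands equal, giving $\nu_{1}=\nu_{2}$.

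\textbf{Step 3: integrated inequalities and main obstacle.} Raising the pointwise estimate of Step 1 to the power $-1/N$ and integrating against $d\nu$, the left-hand side becomes $\int \rho_{t}^{1-1/N}\, d\mm$, yielding \eqref{E:MCPFinal}. For \eqref{eq:rhoinfty}, the elementary lower bound $\tau_{K,N}^{(1-t)}(\theta) \geq (1-t)\, e^{-t\theta\sqrt{(N-1)K^{-}}/N}$ applied with $\theta \leq D$ gives $\tau_{K,N}^{(1-t)}(\sfd(\ee_{0},\ee_{1}))^{N} \geq (1-t)^{N} e^{-tD\sqrt{(N-1)K^{-}}}$, and the pointwise density bound yields the claimed $L^{\infty}$ estimate after taking essential supremum. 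The principal obstacle lies in Step 1: in strictly non-branching spaces the identification between $\nu^{y}$ and the $\MCP$-contraction plan is automatic by unique geodesic prolongability, while here it is only available on a full-measure subset of geodesics, and the measurable bookkeeping needed to propagate this identification through the disintegration in $y$ and to preserve it in the pointwise density bound is the technical heart of the argument.
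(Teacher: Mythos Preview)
Your Step 1 contains the genuine gap you yourself flag but do not resolve. Essential non-branching (Definition \ref{def:essNB}) applies only to optimal dynamical plans between \emph{absolutely continuous} marginals. Both $\nu^{y}$ and the $\MCP$-contraction plan have $\delta_{y}$ as second marginal, so neither falls under the hypothesis, and there is no mechanism forcing them to be carried by the same geodesics. Restricting to $[0,1-\varepsilon]$ does not help on its own: the $\MCP$ condition (Definition \ref{D:MCP}) only asserts the existence of \emph{some} plan to $\delta_{y}$ with the distortion bound, and you have no way of knowing that your disintegrated piece $\nu^{y}$ is that plan, or that the plan is unique. This is circular: you would need uniqueness of the plan to $\delta_{y}$ to get the pointwise density bound, and the density bound is what you intend to use to prove uniqueness.

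Your Step 2 contains a second, independent error: being concentrated on a non-branching set does \emph{not} make $\ee_{1}$ (or the full geodesic) a function of $\ee_{0}$. Non-branching means that two geodesics agreeing on an initial segment $[0,t]$ with $t>0$ must coincide; it says nothing about two geodesics that merely share the initial point $\gamma_{0}$ and separate immediately. So the map property cannot be read off this way.

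The paper follows a different logic. It first \emph{constructs} good Wasserstein geodesics to Dirac masses (Theorem \ref{T:goodgeo}) via Rajala's excess-mass and entropy optimisation, with no use of essential non-branching. Essential non-branching enters only in a contradiction argument: assuming the Kantorovich section $\Gamma(x)$ is multi-valued on a set of positive $\mu_{0}$-measure, one builds two good geodesics to distinct targets whose $t$-marginals overlap on a set of positive $\mm$-measure, and the Rajala--Sturm mixing procedure then manufactures a branching optimal plan between absolutely continuous marginals, contradicting essential non-branching (Proposition \ref{prop:findelta}, Theorem \ref{T:1}). Uniqueness and the map structure come from this argument; the $L^{\infty}$ bound and \eqref{E:MCPFinal} are then obtained by approximating $\mu_{1}$ by finite sums of Diracs and passing to the limit. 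The pointwise density inequality you assert in Step 1 is never established directly for an arbitrary $\nu$.
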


Notice that in particular Theorem \ref{T:final} implies existence and uniqueness of the optimal $W_{2}$-transport map from $\mu_{0}=\rho_{0}\mm \ll \mm$ to $\mu_{1}$. 

Note that in the $\MCP(K,N)$ assumption one requires only a control on geodesics shrinking to  a Delta mass, nevertheless both the estimates \eqref{E:MCPFinal} and \eqref{eq:rhoinfty} are valid for any second marginal $\mu_{1}\in \P_{2}(X)$.
Let us also stress that $\MCP(K,N)$ is the weakest among the finite dimensional Ricci curvature lower bounds conditions; in particular is strictly weaker 
than $\CD$ condition. %A large class of interesting geometric structures which satisfy $\MCP$ but not $\CD$ are the sub-Riemannian manifolds, 
%see for instance \cite{Juillet2009},\cite{Rifford2013},\cite{Rizzi2016}. 

From the technical point of view, notice that while $\MCP$ is a condition on the behaviour of Wasserstein geodesics whose second marginal is a Dirac mass, 
the essentially non-branching property only applies to dynamical optimal geodesics connecting absolutely continuous measures.

\section{Preliminaries}\label{S:prel}
Throughout this note $(X,\sfd)$ is a proper, complete and separable metric space, and $\mm$ is a locally finite non-negative Borel measure (i.e. $\mm(B)< \infty$ for every bounded Borel set $B$).

We will now recall some of the basic objects we will use during the paper. 
For a complete overview on Optimal Transportation theory, we refer to \cite{Vil} and references therein. \\
We will denote with $\Geo(X)$ the set of geodesics of the space, i.e.
$$
\Geo(X) = \{ \gamma \in C([0,1]; X) \colon \sfd(\gamma_{t},\gamma_{s}) = |t-s| \sfd(\gamma_{0},\gamma_{1})\};
$$
for any $t\in [0,1]$ we can consider the evaluation map $\ee_{t} : C([0,1];X) \to X$ defined by $\ee_{t}(\gamma) = \gamma_{t}$.
The set of Borel probability measures over $X$ will be denoted by $\P(X)$, the ones also having  finite second moment are denoted by $\P_{2}(X)$ 
and finally $\P_{ac}(X)$ stands for the set of probability measures absolutely continuous with respect to $\mm$.

We will also consider the set of optimal transference plan  
$$
\Pi_{opt}(\mu_{0},\mu_{1}) : = \left\{ \pi \in \Pi(\mu_{0},\mu_{1}) \colon W_{2}(\mu_{0},\mu_{1})^{2} = \int \sfd^{2}(x,y) \,\pi(dxdy)  \right\};
$$
and the set of optimal dynamical optimal plan 
$$
\Opt(\mu_{0},\mu_{1}) : = \left\{ \nu \in \P(\Geo(X)) \colon \ee_{i\,\sharp}\nu = \mu_{i}, \ i=0,1,\ t \mapsto (\ee_{t})_{\sharp}\nu \ \text{is a } W_{2}\text{-geodesic}  \right\}; 
$$
We  say that an optimal dynamical plan  $\nu\in \Opt(\mu_{0},\mu_{1})$ is given by the map $G:X\to \Geo(X)$ if $\nu=G_{\sharp} \mu_{0}$. 
Notice that in this case in particular the optimal transference plan $(\ee_{0}, \ee_{1})_{\sharp} \nu$ is induced  by the optimal map $\ee_{1}\circ G$.

\begin{definition}
We call a set $\Gamma \subset \Geo(X)$ \emph{non-branching} if for any $\gamma^{1},\gamma^{2} \in \Gamma$ we have: 
if there exists $t \in (0,1)$ such that $\gamma^{1}_{s} = \gamma^{2}_{s}$ for all $s \in [0,t]$, then $\gamma^{1} = \gamma^{2}$.

\medskip
\noindent
A measure $\nu \in \mathcal{P}(\Geo(X))$ is \emph{concentrated on a set of non-branching geodesics} if there exists a non-branching Borel set $\Gamma \subset \Geo(X)$, 
such that $\nu(\Gamma) = 1$.
\end{definition}

Then we recall the following definition given for the first time in \cite{RS2014}.

\begin{definition}\label{def:essNB}
A metric measure space $(X,\sfd,\mm)$ is \emph{essentially non-branching} if for every $\mu_{0},\mu_{1} \in \mathcal{P}_{ac}(X)$,
any $\nu \in \Opt(\mu_{0},\mu_{1})$ is concentrated on a set of non-branching geodesics.
\end{definition}

In order to consider restriction of dynamical plans, for any $s,t \in [0,1]$ with $s \leq t$ we consider the restriction map
$$
\text{restr}^{t}_{s} : C([0,1]; X) \to C([0,1]; X), \qquad \gamma \mapsto \gamma \circ f^{t}_{s},
$$
where $f^{t}_{s} : [0,1] \to [0,1]$ is defined by $f^{t}_{s}(x) = s + (t-s)x$. During this note we will use several times the following fact: 
if $\nu\in \Opt(\mu_{0}, \mu_{1})$ then  the restriction  
$(\text{restr}^{t}_{s})_{\sharp}\nu$ is still an optimal dynamical plan; in particular, called $\mu_{t}:=(\ee_{t})_{\sharp}\nu$, it belongs to $\Opt(\mu_{s}, \mu_{t})$.
This fact simply follows from the triangular inequality of the Wasserstein distance $W_{2}$.

We conclude recalling that for any two measures $\mu_{0},\mu_{1} \in \P(X)$ with $W_{2}(\mu_{0},\mu_{1}) < \infty$, for each $\lambda \in (0,1)$, the set
$$
\mathcal{I}_{\lambda}(\mu_{0},\mu_{1}) := \{ \eta \in \P(X) \colon W_{2}(\mu_{0},\eta) = \lambda W_{2}(\mu_{0},\mu_{1}), \ 
W_{2}(\eta,\mu_{1}) = (1-\lambda) W_{2}(\mu_{0},\mu_{1}) \}, 
$$
is called \emph{set of $\lambda$-intermediate points} and the Excess mass functional, firstly introduced in \cite{R2012}, is defined as follows:
for any $C\geq 0$, $\mathcal{F}_{C} : \P(X) \to [0,1]$ with`
\begin{equation}\label{E:Excessdef}
\mathcal{F}_{C}(\mu) : = \| (\rho - C)^{+} \|_{L^{1}(X,\mm)} + \mu^{s}(X), 
\end{equation}
where $\mu = \rho \mm + \mu^{s}$ with $\mu^{s} \perp \mm$ and $a^{+} = \max\{a,0\}$.

\bigskip

%%%%%%%%%%%%%%%%%%%%%%%%%%%%%%%%%%%%%%%%%%%%%%%%%%%%%%%%%%%%%%%%%%
%%%%%%%%%%%%%%%%%%%%%%%%%%%%%%%%%%%%%%%%%%%%%%%%%%%%%%%%%%%%%%%%%%
%%%%%%%%%%%%%%%%%%%%%%%%%%%%%%%%%%%%%%%%%%%%%%%%%%%%%%%%%%%%%%%%%%
%%%%%%%%%%%%%%%%%%%%%%%%%%%%%%%%%%%%%%%%%%%%%%%%%%%%%%%%%%%%%%%%%%

\subsection{Curvature conditions} 
Here we briefly recall the synthetic notions of lower Ricci curvature bounds, for more detail we refer to  \cite{BS10,lottvillani:metric,sturm:I, sturm:II, Vil}.

In order to formulate curvature properties for $(X,\sfd,\mm)$ we introduce the following distortion coefficients: given two numbers $K,N\in \erre$ with $N\geq1$, we set for $(t,\theta) \in[0,1] \times \erre_{+}$,
\begin{equation}\label{E:tau}
\tau_{K,N}^{(t)}(\theta):= 
\begin{cases}
\infty, & \textrm{if}\ K\theta^{2} \geq (N-1)\pi^{2}, \crcr
\displaystyle  t^{\frac{1}{N}}\Bigg(\frac{\sin(t\theta\sqrt{K/(N-1)})}{\sin(\theta\sqrt{K/(N-1)})}\Bigg)^{\frac{N-1}{N}} & \textrm{if}\ 0< K\theta^{2} \leq (N-1)\pi^{2}, \crcr
t & \textrm{if}\ K \theta^{2}<0\ \textrm{or}\\& \textrm{if}\ K \theta^{2}=0\ \textrm{and}\ N=1,  \crcr
\displaystyle  t^{\frac{1}{N}}\Bigg(\frac{\sinh(t\theta\sqrt{-K/(N-1)})}{\sinh(\theta\sqrt{-K/(N-1)})}\Bigg)^{\frac{N-1}{N}} & \textrm{if}\ K\theta^{2} \leq 0 \ \textrm{and}\ N>1.
\end{cases}
\end{equation}
That is, $\tau_{K,N}^{(t)}(\theta): = t^{1/N} \sigma_{K,N-1}^{(t)}(\theta)^{(N-1)/N}$ where
$$
\sigma_{K,N}^{(t)}(\theta) = \frac{\sin(t\theta\sqrt{K/N})}{\sin(\theta\sqrt{K/N})}, 
$$
if $0 < K\theta^{2}<N\pi^{2}$ and with appropriate interpretation otherwise. 
\medskip

\begin{remark}\label{R:estimate}
During the paper we will use the following easy estimate involving the $\sigma$ coefficient for negative $K$: 
\begin{equation}\label{E:sigma}
\frac{\sigma_{K,N}^{(\lambda)}(\theta) }{\lambda} \geq \exp\Big\{ -(1-\lambda) \theta \sqrt{K^{-}/N} \Big\};
\end{equation}
indeed denoting $\alpha = \theta \sqrt{K^{-}/N}$:
$$
\frac{\sigma_{K,N}^{(\lambda)}(\theta) }{\lambda} =   \frac{e^{\lambda \alpha} -e^{-\lambda \alpha}   }{\lambda(e^{ \alpha} -e^{- \alpha})}  
=  e^{- (1-\lambda)\alpha}  \frac{1 -e^{-2\lambda \alpha}   }{\lambda(1 -e^{- 2\alpha})};
$$
Then to prove \eqref{E:sigma} it is sufficient to show $1 -e^{-2\lambda \alpha} \geq \lambda(1 -e^{- 2\alpha})$; that can be rearranged as 
$$
1 - \lambda \geq e^{-2\lambda \alpha} - \lambda e^{-2 \alpha};
$$
but now this last inequality holds for any $\lambda \in [0,1]$: just observe that the right hand side is convex.
\end{remark}

\noindent
Then we also recall the definition of the R\'enyi Entropy functional: $\E_{N} : \P(X) \to [0, \infty]$, 
\begin{equation}\label{E:Entropydef}
\E_{N}(\mu)  = \int_{X} \rho^{1-1/N}(x) \,\mm(dx),
\end{equation}
where $\mu = \rho \mm + \mu^{s}$ with $\mu^{s}\perp \mm$;

\begin{definition}\label{D:CD*}
Let $K \in \R$ and $N \in [1,\infty)$; a metric measure space  $(X,\sfd,\mm)$ verifies $\CD^{*}(K,N)$ if for any two $\mu_{0},\mu_{1} \in \P_{ac}(X)$ 
with bounded support and contained in 
$\supp(\mm)$ there exists $\nu \in \Opt(\mu_{0},\mu_{1})$ such that for any $N'\geq N$
\begin{equation}\label{E:CD*}
\E_{N'}(\mu_{t}) \geq \int \sigma_{K,N'}^{(1-t)} (\sfd(\gamma_{0},\gamma_{1})) \rho_{0}^{-1/N'} 
+ \sigma_{K,N'}^{(t)} (\sfd(\gamma_{0},\gamma_{1})) \rho_{1}^{-1/N'} \,\nu(d\gamma), 
\end{equation}
for any $t\in [0,1]$, where we have written $(\ee_{t})_{\sharp}\nu = \rho_{t} \mm + \mu_{t}^{sing}$ with $\mm \perp \mu_{t}^{sing}$.
\end{definition}

\begin{definition}\label{D:MCP}
Let $K \in \R$ and $N \in [1,\infty)$; a metric measure space  $(X,\sfd,\mm)$ verifies $\MCP(K,N)$ if for any $\mu_{0} \in \P_{ac}(X)$ with bounded  
support and contained in $\supp(\mm)$ and any $o \in \supp(\mm)$ there exists $\nu \in \Opt(\mu_{0},\delta_{o})$ such that 
\begin{equation}\label{E:MCP}
\E_{N}(\mu_{t})  \geq \int \tau_{K,N}^{(1-t)} (\sfd(x,o)) \rho_{0}^{1-1/N} \mm(dx), 
\end{equation}
for any $t\in [0,1)$, where we have written $(\ee_{t})_{\sharp}\nu = \rho_{t} \mm + \mu_{t}^{sing}$ with $\mm \perp \mu_{t}^{sing}$.
%\\ $(X,\sfd,\mm)$ verifies \emph{strong} $\MCP(K,N)$ if for any $\mu_{0} \in \P(X)$ with bounded 
%support and contained in $\supp(\mm)$ and any $o \in \supp(\mm)$, for every  $\nu \in \Opt(\mu_{0},\delta_{o})$ and every  $N'\geq N$ inequality \eqref{E:MCP} holds.
\end{definition}

So during this note we will always assume the proper metric measure space $(X,\sfd,\mm)$ to satisfy $\MCP(K,N)$, for some $K,N \in \R$, and to be
essentially non-branching. This will imply that  $\supp(\mm) = X$ and that $(X,\sfd)$ is geodesic.

\section{Good Geodesics under $\MCP$}

Inspired by (and partly following)  the clever work of Rajala  \cite{R2012},\cite{R2013} on the existence of good geodesics in $\CD/\CD^{*}$-spaces, in this section 
we prove the next result roughly stating that in an $\MCP(K,N)$-space we can construct 
Wasserstein geodesics which are absolutely continuous and whose densities satisfy $L^{\infty}$ and Entropy bounds. 
Geodesics satisfying all of these properties will be during this note sometimes named ``good geodesics''.

\begin{theorem}\label{T:goodgeo} 
Let $(X,\sfd,\mm)$ verify $\MCP(K,N)$, for some $K\in \R$ and $N\in [1,\infty)$. Then for any $\mu_{0} \in \P_{ac}(X)$ with bounded 
support and essentially bounded density  and any $o \in X$ there exists $\nu \in \Opt(\mu_{0},\delta_{o})$ such that $(\ee_{t})_{\sharp} \nu \ll \mm$ for every $t \in [0,1)$. Moreover, writing $\mu_{t}:=(\ee_{t})_{\sharp} \nu = \rho_{t} \mm$ for $t\in [0,1)$, 
we have the following upper bound for the density
\begin{equation}\label{E:upperbound}
\| \rho_{t}\|_{L^{\infty}(X,\mm)} \leq \frac{1}{(1-t)^{N}}e^{Dt\sqrt{(N-1)K^{-}}} \| \rho_{0} \|_{L^{\infty}(X,\mm)},  \quad 	\forall t \in [0,1),
\end{equation}
and the following entropy inequality
\begin{equation}\label{E:weakMCP}
\E_{N}(\mu_{t}) \geq (1-t) e^{ - D t  \frac{\sqrt{ (N-1)K^{-}}}{N} } \E_{N}(\mu_{0}),  \quad 	\forall t \in [0,1),
\end{equation}
where $D = \diam(\supp (\mu_{0}) \cup \{ o\})$ and $K^{-} = \max\{-K,0\}$.
\end{theorem}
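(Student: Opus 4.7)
The plan is to invoke $\MCP(K,N)$ to produce candidates in $\Opt(\mu_{0},\delta_{o})$, then isolate a specific one whose time-$t$ interpolation is absolutely continuous with the claimed $L^{\infty}$-bound via an excess-mass minimization argument modelled on Rajala \cite{R2012,R2013}; the entropy bound \eqref{E:weakMCP} then drops out of \eqref{E:MCP} combined with Remark \ref{R:estimate}.

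The entropy inequality is an immediate corollary. Writing $\tau_{K,N}^{(1-t)}(\theta) = (1-t)^{1/N}\sigma_{K,N-1}^{(1-t)}(\theta)^{(N-1)/N}$ and invoking \eqref{E:sigma} with $\lambda=1-t$ yields the pointwise estimate
\[
\tau_{K,N}^{(1-t)}(\theta) \geq (1-t)\, \exp\Bigl(-\frac{t\theta\sqrt{(N-1)K^{-}}}{N}\Bigr),
\]
which substituted into \eqref{E:MCP} with $\theta = \sfd(x,o) \leq D$ gives \eqref{E:weakMCP}.

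For \eqref{E:upperbound}, fix $t \in (0,1)$ and set $C_{t}:=\|\rho_{0}\|_{L^{\infty}}(1-t)^{-N}\exp(Dt\sqrt{(N-1)K^{-}})$. Minimize the functional $\nu \mapsto \F_{C_{t}}((\ee_{t})_{\sharp}\nu)$ over $\Opt(\mu_{0},\delta_{o})$; lower semicontinuity of $\F_{C_{t}}$ under weak convergence and tightness of $\Opt(\mu_{0},\delta_{o})$ furnish a minimizer $\nu^{*}$. The key claim is $\F_{C_{t}}((\ee_{t})_{\sharp}\nu^{*})=0$, which is equivalent to $(\ee_{t})_{\sharp}\nu^{*} = \rho_{t}\mm$ with $\rho_{t}\leq C_{t}$. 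Argue by contradiction: if the excess is positive, extract a Borel witness $E\subset X$ (either the level set $\{\rho_{t}^{*}>C_{t}\}$ or the support of a singular part of $(\ee_{t})_{\sharp}\nu^{*}$). Restrict $\nu^{*}$ to geodesics hitting $E$ at time $t$, normalize, and apply $\MCP(K,N)$ to the suitably rescaled initial marginal to manufacture an alternative dynamical plan carrying the same portion of mass whose time-$t$ interpolation spreads more uniformly over $E$. Combining the concavity bound $\int_E \rho^{1-1/N}\,d\mm \leq \mm(E)^{1/N}\bigl(\int_E \rho\,d\mm\bigr)^{1-1/N}$ with the $\tau$-estimate above shows that the new interpolation has density on $E$ strictly below $C_{t}$. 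Glueing this improved sub-plan back with the unaltered portion of $\nu^{*}$ produces a competitor in $\Opt(\mu_{0},\delta_{o})$ with strictly smaller $\F_{C_{t}}$, contradicting the minimality of $\nu^{*}$. Iterating over a countable dense set of times in $[0,1)$ and extracting a diagonal weak limit yields a single $\nu$ satisfying \eqref{E:upperbound} at every $t \in [0,1)$.

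The principal obstacle is the glueing step: one must verify that the recomposition of the improved sub-plan with the complementary portion of $\nu^{*}$ still lies in $\Opt(\mu_{0},\delta_{o})$, i.e.\ has the correct marginals $\mu_{0}$ and $\delta_{o}$ and induces a $W_{2}$-optimal transference plan. This hinges on a careful measurable decomposition of $\nu^{*}$ over $\ee_{t}^{-1}(E)$ and its complement, and on the compatibility of time-restriction with $W_{2}$-optimality recalled in Section \ref{S:prel}. A secondary difficulty is pinning down the constant $C_{t}$: since MCP only controls an integrated entropy, converting it to a pointwise $L^{\infty}$ bound forces a tight use of the concavity bound above coupled with the $N$-fold product structure of the $\tau$-coefficient, which is what ultimately delivers both the $(1-t)^{-N}$ volume factor and the exponential curvature correction.
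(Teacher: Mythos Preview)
Your outline has the right ingredients but contains two genuine gaps.

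First, treating \eqref{E:weakMCP} as ``an immediate corollary'' of \eqref{E:MCP} conflates two different dynamical plans. The $\MCP(K,N)$ definition only asserts the existence of \emph{some} $\nu\in\Opt(\mu_{0},\delta_{o})$ satisfying \eqref{E:MCP}; there is no reason this coincides with the $\nu^{*}$ you manufacture by minimising $\F_{C_{t}}$. The theorem demands a \emph{single} $\nu$ obeying both \eqref{E:upperbound} and \eqref{E:weakMCP} simultaneously. The paper resolves this by working with \emph{maximisers} of $\E_{N}$ over $\mathcal{I}_{\lambda}(\mu_{0},\delta_{o})$ and proving (Step~4, following \cite{R2013}) that every such maximiser also has $\F_{C}=0$; this is what forces both estimates onto the same object.

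Second, the closing step---``iterating over a countable dense set of times and extracting a diagonal weak limit''---does not work as written. For each $t$ you obtain a possibly \emph{different} $\nu^{*}_{t}$ with good density at time $t$; a weak limit of these inherits the bound at time $t$ only if the approximating plans eventually satisfy it at that same $t$, which you have not arranged. The paper circumvents this by an explicit one-sided recursion: fix $\lambda\in(0,1)$, define $\Gamma_{(1-\lambda)^{k}}$ as a good $\lambda$-intermediate point between $\Gamma_{(1-\lambda)^{k-1}}$ and $\delta_{o}$, and show that the bounds at the discrete times $t_{\lambda,k}=(1-\lambda)^{k}$ telescope (the geometric sum in the exponent collapses to $1-t_{\lambda,k}$). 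Only then is $\lambda\to 0$ taken. This iterated construction is forced by the asymmetry of $\MCP$: the bisection scheme of \cite{R2012,R2013} is unavailable here.

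Two smaller remarks. Since $\mu_{1}=\delta_{o}$, every $\lambda$-intermediate point is $(\ee_{\lambda})_{\sharp}\nu$ for some $\nu\in\Opt(\mu_{0},\delta_{o})$ and every coupling to $\delta_{o}$ is automatically optimal, so minimising over $\Opt$ rather than over $\mathcal{I}_{\lambda}$ is harmless, and the glueing you flag as the ``principal obstacle'' is in fact trivial. On the other hand your contradiction sketch is slightly off: the redistributed mass need not land on $E$, and the competitor need not have \emph{strictly} smaller excess. The paper's argument (Step~3) instead selects $\mu$ among minimisers with near-maximal $\mm(\{\rho_{\mu}>C\})$, shows the competitor $\tilde\mu$ has \emph{equal} excess (hence is also a minimiser), but strictly larger super-level set---contradicting the near-extremal choice.
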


\begin{proof}
The proof will consist of several steps. We consider $\mu_{0}=\rho_{0} \mm$ and  $\mu_{1} = \delta_{o}$ fixed once for all. Notice moreover that it is sufficient to prove the claim only for $K < 0$. \medskip

{\bf Step 1.} Consider $\lambda \in (0,1)$ fixed.
From $\MCP(K,N)$ there exists $\nu \in \Opt(\mu_{0},\mu_{1})$ verifying \eqref{E:MCP} that is concentrated on a family of geodesics of length at most $D$.
Then from Jensen inequality it follows that 
\begin{align*}
\mm (\{ \rho_{\lambda} > 0 \})^{1/N} \geq  \frac{1}{\|\rho_{0} \|^{1/N}_{L^{\infty}(X,\mm)}} \int_{X} \tau_{K,N}^{(1-\lambda)} (\sfd(x,o))\,\mu_{0}(dx),
\end{align*}
where, as usual, we have written $(\ee_{\lambda})_{\sharp}\nu = \rho_{\lambda} \mm + \mu_{\lambda}^{sing}$ with $\mm \perp \mu_{\lambda}^{sing}$.
Then from \eqref{E:sigma} it follows that
\begin{equation}\label{eq:tauexp}
\tau_{K,N}^{(1-\lambda)} (\theta) = (1-\lambda) \left( \frac{\sigma^{(1-\lambda)}_{K,N-1} (\theta)}{ 1-\lambda} \right)^{\frac{N-1}{N}} \geq (1-\lambda)\exp\big\{ - \theta \lambda \sqrt{ (N-1)K^{-}}/N  \big\}.
\end{equation}
Then 
\begin{equation}\label{E:prima}
\mm (\{ \rho_{\lambda} > 0 \}) \geq  (1-\lambda)^{N}\left( e^{-D\lambda \sqrt{(N-1)K^{-}}} \| \rho_{0} \|_{L^{\infty}(X,\mm)} \right)^{-1}.
\end{equation}
From now on we use the following notation $C(K,N,D,\lambda) : = (1-\lambda)^{-N} e^{\lambda D\sqrt{(N-1)K^{-}}}$.

\medskip

{\bf Step 2.} 
We will need to minimize the excess mass functional \eqref{E:Excessdef} and maximise the R\'enyi Entropy functional on the set of $\lambda$-intermediate points 
$\mathcal{I}_{\lambda}(\mu_{0},\mu_{1})$.

From $\MCP(K,N)$ and the boundedness of the support of $\mu_{0}$, it follows that for any $\lambda \in (0,1)$ the set $\mathcal{I}_{\lambda}(\mu_{0},\mu_{1})$ 
is compact in $(\P(X),W_{2})$, see \cite[Lemma 3.3]{R2012}. 
Moreover for any $C>0$  the excess mass functional  $\mathcal{F}_{C}$ is lower semi-continuous over bounded metric spaces with respect to the 
Wasserstein distance $W_{2}$; therefore,  for any $C\geq 0$ and any $\lambda \in (0,1)$, there exists a minimiser 
of $\mathcal{F}_{C}$ in $\mathcal{I}_{\lambda}(\mu_{0},\mu_{1})$, see  \cite[Lemma 3.6 and Proposition 3.7] {R2012} for additional details.
Moreover $\E_{N}$ is upper semi-continuous and restricted to measures supported on a given bounded set attains only values on a compact interval; 
it follows that  $\E_{N}$ has maximum in $\mathcal{I}_{\lambda}(\mu_{0},\mu_{1})$, see \cite[Lemma 2.4]{R2013} for more details.
\medskip

{\bf Step 3.} Estimate on minimisers of the excess mass functional. \\ 
This part is taken from Proposition 3.11 of \cite{R2012}. We show that for $C> M : = C(K,N,D,\lambda) \| \rho_{0}\|_{L^{\infty(X,\mm)}}$, it holds 
$$
\min_{\eta \in \mathcal{I_{\lambda}}(\mu_{0},\mu_{1})} \mathcal{F}_{C}(\eta) = 0.
$$
We will argue by contradiction. Denote with $\mathcal{I}_{\min} \subset \mathcal{I}_{\lambda}(\mu_{0},\mu_{1})$ the minimisers of $\mathcal{F}_{C}$.
Let $\mu \in \mathcal{I}_{\min}$ be such that 
\begin{equation}\label{eq:MC14}
\mm (\{ \rho_{\mu} > C \}) \geq \left( \frac{M}{C} \right)^{1/4} \, \sup_{\eta \in \mathcal{I}_{\min}} \mm (\{ \rho_{\eta} > C \}),
\end{equation}
where $\mu = \rho_{\mu} \mm + \mu^{s}$ with $\mu^{s} \perp \mm$ and $\eta = \rho_{\eta} \mm + \eta^{s}$ with $\eta^{s} \perp \mm$. 
Consider the set $A : = \{ \rho_{\mu} > C \}$ and assume by contradiction that $\mm(A) > 0$. Then there exists $\delta > 0$ such that 
$$
\mm(A') \geq \left(\frac{M}{C}\right)^{1/2} \mm(A),
$$
with $A' : = \{ \rho_{\mu} > C +\delta \}$. Let $\pi \in \Pi_{opt}(\mu_{0},\mu)$ be an optimal transference plan and  consider any dynamical optimal plan $\tilde \nu$ given by {\bf Step 1} such that 
$$
(\ee_{0})_{\sharp} \tilde \nu =  (P_{1})_{\sharp} \left( \frac{\pi\llcorner_{X \times A'}}{\mu(A')}\right), \qquad (\ee_{1})_{\sharp}  \tilde \nu = \delta_{o}
$$
and verifying \eqref{E:prima}. For ease of notation denote the geodesic $\big((\ee_{s})_{\sharp}\tilde \nu \big)_{s \in [0,1]}$, with $\Gamma$. 
Write then $\Gamma_{\lambda} = \rho_{\Gamma} \mm + \Gamma^{s}$ with $\Gamma^{s} \perp \mm$. 
From \eqref{E:prima} and the definition of $\tilde \nu$ it follows that 
\begin{equation}\label{E:second}
\mm (\{ \rho_{\Gamma} > 0  \}) \geq \frac{\mu(A')}{M} \geq \frac{C}{M} \mm(A') \geq \left(\frac{C}{M}\right)^{1/2} \mm(A).
\end{equation}
Now we can consider a new measure
$$
\tilde \mu = \mu\llcorner_{X \setminus A'} \, + \, \frac{C}{C+\delta} \mu\llcorner_{A'}\, + \, \frac{\delta}{C + \delta} \mu(A') \Gamma_{\lambda}.
$$
From Lemma 3.5 of \cite{R2012} it follows that $\tilde \mu \in \mathcal{I}_{\lambda}(\mu_{0},\mu_{1})$.
Let us compute the variation of the excess mass functional: adopting the usual notation $\tilde \mu = \rho_{\tilde \mu} \mm + \tilde \mu^{s}$ with $\tilde \mu^{s}\perp \mm,$ we have
\begin{align*}
\mathcal{F}_{C}(\mu) - \mathcal{F}_{C}(\tilde \mu)  =&~ \int_{X} (\rho_{\mu} - C)^{+} \, \mm + \mu^{s}(X) -  \int_{X} (\rho_{\tilde \mu} - C)^{+} \, \mm - \tilde \mu^{s}(X)  \crcr
=&~  \int_{X\setminus A'} (\rho_{\mu} - C)^{+} - \left(\rho_{\mu} + \frac{\delta}{C+\delta}\mu(A')\rho_{\Gamma} - C\right)^{+} \,\mm \crcr
&~ +  \int_{ A'} (\rho_{\mu} - C)^{+} - \left(\frac{C}{C+\delta} \rho_{\mu} + \frac{\delta}{C+\delta}\mu(A')\rho_{\Gamma} - C\right)^{+} \,\mm \crcr
&~ + \frac{\delta}{C+\delta} \big( \mu^{s}(A') -\mu(A') \Gamma^{s}(X)  \big) \crcr
=&~  \int_{X\setminus A'} (\rho_{\mu} - C)^{+} - \left(\rho_{\mu} + \frac{\delta}{C+\delta}\mu(A')\rho_{\Gamma} - C\right)^{+} \,\mm \crcr
&~ +  \int_{ A'}  \frac{\delta}{C+\delta}( \rho_{\mu} - \mu(A')\rho_{\Gamma}) \,\mm + \frac{\delta}{C+\delta} \big( \mu^{s}(A') -\mu(A') \Gamma^{s}(X)  \big)  \crcr
=&~  \int_{X\setminus A'} (\rho_{\mu} - C)^{+} - \left(\rho_{\mu} + \frac{\delta}{C+\delta}\mu(A')\rho_{\Gamma} - C\right)^{+} + \frac{\delta}{C+\delta} \mu(A') \rho_{\Gamma} \,\mm \crcr 
=&~  \int_{ \{\rho_{\mu} < C \leq \frac{\delta}{C+\delta} \mu(A') \rho_{\Gamma} +\rho_{\mu}\}   }   (C - \rho_{\mu}) \,\mm  + 
			 \int_{ \{C >\frac{\delta}{C+\delta} \mu(A') \rho_{\Gamma} +\rho_{\mu}\}   }   \frac{\delta}{C+\delta} \mu(A') \rho_{\Gamma} \,\mm \crcr
=&~ \int_{\{\rho_{\mu} < C\}} \min \left\{ C - \rho_{\mu}, \frac{\delta}{C+\delta} \mu(A') \rho_{\Gamma} \right\}\,\mm.
\end{align*}
Since the integrand is non-negative, from the minimality of $\mu$, the integral must be zero and thus $\tilde{\mu}\in  \mathcal{I}_{\min}$. Necessarily 
$$
\mm \left( \{ \rho_{\mu} < C \} \cap \{\rho_{\Gamma} > 0 \} \right) = 0.
$$
Moreover for $y \in \{\rho_{\Gamma} > 0 \} \cap \{\rho_{\mu} \geq C \}$ it holds $\rho_{\tilde \mu} > C$. Hence, from \eqref{E:second} and \eqref{eq:MC14}  we infer
$$
\mm \left( \{\rho_{\tilde \mu} > C\} \right) \geq \mm(\{\rho_{\Gamma} > 0\}) \geq   \left(\frac{C}{M}\right)^{1/2} \mm(A) 
\geq \left( \frac{C}{M} \right)^{1/4} \, \sup_{\eta \in \mathcal{I}_{\min}} \mm (\{ \rho_{\eta} > C \}),
$$
yielding a contradiction, since $\tilde{\mu}\in  \mathcal{I}_{\min}$ and $C>M$.

It remains to  consider the case $\mm(A) = 0$ and $\mu^{s}(X)>0$. This can be treated  analogously by redistributing the  singular the mass using \eqref{E:prima}. This gives a contradiction with the minimality property of $\mu$ since the value of functional ${\mathcal F}_{C}$ evaluated at the  combination of the redistributed singular part and the absolutely continuous part of  $\mu$, is lower than ${\mathcal F}_{C}(\mu)$.

At this point we have  shown that for any $C> C(K,N,D,\lambda) \| \rho_{0}\|_{L^{\infty}(X,\mm)}$, it holds $$
\min_{\eta \in \mathcal{I}_{\lambda}(\mu_{0},\mu_{1})} \mathcal{F}_{C}(\eta) = 0;
$$
with an easy argument one also obtains the same property for $C = C(K,N,D,\lambda)  \| \rho_{0}\|_{L^{\infty}(X,\mm)}$,  see   \cite[Corollary 3.12]{R2012}. 
\\The upper bound just obtained for good intermediate points is the building block to obtain a geodesic from $\mu_{0}$ to $\mu_{1}$ that at each time is absolutely 
continuous with respect to $\mm$ and verifies \eqref{E:upperbound}.

\medskip

{\bf Step 4.} Maximising $\E_{N}$. 
\\To obtain also  the curvature inequality \eqref{E:MCP} one needs to prove the following claim: 
for any maximiser $\mu \in \mathcal{I}_{\lambda}(\mu_{0},\mu_{1})$ of $\E_{N}$ we have $\F_{C}(\mu) = 0$,  where 
$C = C(K,N,D,\lambda) \| \rho_{0}\|_{L^{\infty}(X,\mm)}$.

Such a  claim has been obtained in \cite[Proposition 3]{R2013} under the stronger curvature condition given by the Curvature-Dimension condition $\CD(K,N)$; 
the proof was a modification of \cite[Proposition 3.11]{R2012} that we have already adapted to the weaker curvature condition given by $\MCP(K,N)$ in {\bf Step 3}. 
We therefore take the claim for granted and refer to \cite{R2013} for additional details.

\medskip

{\bf Step 5.} From $\lambda$-Intermediate points to geodesic: upper bound. \\
To summarise: we proved that any maximiser $\mu_{\lambda}$ of $\E_{N}$ restricted to $\mathcal{I}_{\lambda}(\mu_{0},\mu_{1})$ 
(that always exists) is absolutely continuous with respect to $\mm$ and, writing $\mu_{\lambda} = \rho_{\lambda} \mm$,   the estimate \eqref{E:upperbound} holds: 
$$
\| \rho_{\lambda}\|_{L^{\infty}(X,\mm)} \leq \frac{1}{(1-\lambda)^{N}}e^{D\lambda\sqrt{(N-1)K^{-}}} \| \rho_{0} \|_{L^{\infty}(X,\mm)}. 
$$
We denote with $\mathcal{GI}_{\lambda}(\mu_{0},\mu_{1})$ the set of these good $\lambda$-intermediate points.  

To conclude the proof of \eqref{E:upperbound},  we need to prove the same statement but for a complete $W_{2}$-geodesic.  
This part of the proof does not follow  \cite{R2012} and \cite{R2013}; there
the ``globalization'' procedure was built on a bisection argument taking advantage of the symmetric formulation of $\CD$ and $\CD^{*}$ conditions. 
In our framework such a symmetry breaks down and we are forced to proceed with a different argument.
Anyway we recall that a non-symmetric construction was done in Section 5 of \cite{R2012} where it was shown that Definition 2.5 implies $\MCP$ in the sense of Ohta.

Consider $\lambda \in (0,1)$. First we  define recursively a curve in $\P(X)$ only on a countable  subset of $[0,1]$:  
$$
\Gamma_{1} = \mu_{0}, \qquad \Gamma_{(1-\lambda)^{k}} \in \mathcal{GI}_{\lambda}(\Gamma_{(1-\lambda)^{k-1}},\mu_{1}) \quad \forall  k \in \N, k \geq 1.
$$
Then $\Gamma$ is defined on the collection of points $t_{\lambda,k}:= (1-\lambda)^{k},  k \in \N$,  and in particular
$\Gamma_{t_{\lambda, 0}} = \mu_{0}$. Let us prove that on such points both \eqref{E:MCP} and \eqref{E:upperbound} are verified.

We start with the upper bound \eqref{E:upperbound}: if $\Gamma_{t_{\lambda,k}} = \rho^{\lambda}_{k} \mm$ then {\bf Step 4} implies 
$$
\| \rho_{k}^{\lambda}\|_{L^{\infty}(X,\mm)} \leq \frac{1}{(1-\lambda)^{N}}e^{D (1-\lambda)^{k-1} \lambda\sqrt{(N-1)K^{-}}} \| \rho_{k-1}^{\lambda} \|_{L^{\infty}(X,\mm)}, 
$$
where the term $D (1-\lambda)^{k-1}$ comes from the fact that the distance from the support of $\rho_{k-1}^{\lambda}$ and $\delta_{o}=\mu_{1}$ is bounded 
by  $D (1-\lambda)^{k-1}$ since we have taken $k$ subsequent $\lambda$-intermediate points. 
Hence we have
$$
\| \rho_{k}^{\lambda}\|_{L^{\infty}(X,\mm)} \leq \frac{1}{(1-\lambda)^{kN}}e^{D \lambda\sqrt{(N-1)K^{-}} \sum_{1\leq n \leq k}(1-\lambda)^{n-1}} \| \rho_{0} \|_{L^{\infty}(X,\mm)}, 
$$
Calculating the geometric sum yields:

\begin{equation}\label{eq:Linftyboundlambda}
\| \rho_{k}^{\lambda}\|_{L^{\infty}(X,\mm)} \leq \frac{1}{t_{\lambda,k}^{N}}e^{D  (1 - t_{\lambda,k}) \sqrt{(N-1)K^{-}}  }  \| \rho_{0} \|_{L^{\infty}(X,\mm)}.
\end{equation}
Notice that \eqref{eq:Linftyboundlambda} is stable if we let $\lambda\to 0$.
\medskip

{\bf Step 6.} From $\lambda$-Intermediate points to geodesic: Entropy inequality. \\
Since we are considering an optimal transport problem to a Dirac mass, we know that disjoint annular regions centred in the Dirac mass remain disjoint along 
the optimal transport. Then we can restrict ourselves to a sufficiently small annular region 
so that the lengths of the optimal geodesics are almost equal to  $D$ and, 
 by the continuity of the map $\theta \mapsto \tau_{K,N}^{(t)}(\theta)$, assume that all the optimal geodesics have constant length equals to $D$
(for a related reduction argument see the comments after Lemma 3.1 of \cite{R2013}). Using iteratively \eqref{E:MCP} we then have 
$$
\E_{N}(\Gamma_{t_{\lambda,k}}) \geq  \E_{N}(\Gamma_{\lambda,0}) \prod_{n =1}^{k} \tau_{K,N}^{(1-\lambda)}(D(1-\lambda)^{n-1}) \quad \forall k\geq 1;
$$ 
using \eqref{eq:tauexp} we obtain 
\begin{align*}
\E_{N}(\Gamma_{t_{\lambda,k}}) 
\geq &~ \E_{N}(\Gamma_{\lambda,0}) (1-\lambda)^{k}   \exp\left\{ -D \lambda \sum_{n =1}^{k} (1-\lambda)^{n-1} \sqrt{ (N-1)K^{-}}/N    \right\}  \crcr
= &~ \E_{N}(\Gamma_{\lambda,0}) t_{\lambda,k}   \exp\left\{ -D(1-t_{\lambda,k})  \sqrt{ (N-1)K^{-}}/N   \right\}. 
\end{align*}
In particular this implies that 
\begin{equation}\label{eq:entKNlambda}
\E_{N}(\Gamma_{t_{\lambda,k}})  \geq \int  t_{\lambda,k}   \exp\big\{ - \sfd(x,o) (1-t_{\lambda,k})  \sqrt{ (N-1)K^{-}}/N   \big\} \rho_{0}(x)^{-1/N} \,\mu_{0}(dx).
\end{equation}
Notice that \eqref{eq:entKNlambda} is stable for $\lambda\to 0$.

\medskip

In order to conclude,   choose $\lambda:=2^{-j}$ for $j\geq 1$, set $s_{j,k}:=1-(1-2^{-j})^{k}=1-t_{2^{-j},k}$,  for $k \geq 1$,  and define
$$
\bar{\Gamma}^{j}_{s_{j,k}}:=\Gamma^{2^{-j}}_{1-t_{2^{-j},k}} \quad \text{for } k \geq 1, \qquad \text{ and }  
\qquad \bar{\Gamma}^{j}_{0}:=\Gamma^{s^{-j}}_{t_{2^{-j},0}}=\mu_{0}.
$$
Since all the measures $ \{\bar{\Gamma}^{j}_{s_{j,k}}\}_{j,k \in \N}$ are contained in a common compact set, they are precompact in $(\P_{2}, W_{2})$. 
Via a diagonal argument,  letting $j\to \infty$, we then get a limit $W_{2}$-geodesic $\bar{\Gamma}$ 
with $\bar{\Gamma}_{0}=\mu_{0}$ and $\bar{\Gamma}_{1}=\mu_{1}$; 
moreover, since \eqref{eq:Linftyboundlambda} and \eqref{eq:entKNlambda} are  stable if we let  $\lambda \to 0$, 
on the one hand using that uniform density bounds are stable under weak convergence, 
we conclude that the limit geodesic $\bar{\Gamma}_{s} = \rho_{s} \mm$  satisfies the desired bound
$$
\| \rho_{s}\|_{L^{\infty}(X,\mm)} \leq \frac{1}{(1- s)^{N}}e^{Ds \sqrt{(N-1)K^{-}}  }  \| \rho_{0} \|_{L^{\infty}(X,\mm)};
$$
on the other hand using that  the entropy is upper semi-continuous under $W_{2}$-convergence we get that 
\begin{align*}
\int \rho_{s}^{1-1/N} \, \mm \geq &~ (1-s) \int     e^{ - \sfd(x,o) s  \frac{\sqrt{ (N-1)K^{-}}}{N} } \rho_{0}(x)^{1-1/N} \,\mm(dx) \crcr
\geq &~ (1-s) e^{ - D s  \frac{\sqrt{ (N-1)K^{-}}}{N} }  \int     \rho_{0}(x)^{1-1/N} \,\mm(dx).
\end{align*}
\end{proof}

\medskip

%%%%%%%%%%%%%%%%%%%%%%%%%%%%%%%%%%%%%%%%%%%%%%%%%%%%%%%%%%%%%%%%
%%%%%%%%%%%%%%%%%%%%%%%%%%%%%%%%%%%%%%%%%%%%%%%%%%%%%%%%%%%%%%%%
%%%%%%%%%%%%%%%%%%%%%%%%%%%%%%%%%%%%%%%%%%%%%%%%%%%%%%%%%%%%%%%%
%%%%%%%%%%%%%%%%%%%%%%%%%%%%%%%%%%%%%%%%%%%%%%%%%%%%%%%%%%%%%%%%

\section{Existence of good geodesics for general second marginal}
The goal of this section is to prove the following  result.

\begin{theorem}\label{thm:MCPmu1}
Let $(X,\sfd,\mm)$ be an essentially non-branching m.m.s. satisfying the  $\MCP(K,N)$ condition. Let $\mu_{0}, \mu_{1} \in \P(X)$ have bounded 
support and  assume $\mu_{0}=\rho_{0} \mm \ll \mm$ with $\rho_{0}$ essentially bounded.

 Then  there exists $\nu \in \Opt(\mu_{0},\mu_{1})$  such that   $(\ee_{t})_{\sharp} \nu \ll \mm$ for every $t\in [0,1)$. Moreover, denoting $(\ee_{t})_{\sharp} \nu=\mu_{t}= \rho_{t} \mm$ for any $t \in [0,1)$,  the upper bound \eqref{E:upperbound} and the entropy inequality \eqref{E:weakMCP} hold  with  $D = \diam(\supp (\mu_{0}) \cup \supp (\mu_{1}))$.
\end{theorem}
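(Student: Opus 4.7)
I would reduce to Theorem \ref{T:goodgeo} via an atomic approximation of $\mu_1$ together with a disintegration of an optimal plan. Choose a sequence of atomic measures $\mu_1^n:=\sum_{i=1}^{N_n} c_i^n\delta_{p_i^n}$ supported in $\supp(\mu_1)$ (built from a Borel partition of $\supp(\mu_1)$ into sets of diameter $\leq 1/n$, with $c_i^n:=\mu_1(B_i^n)$ and $p_i^n\in B_i^n$), so that $W_2(\mu_1^n,\mu_1)\to 0$ and $D_n:=\diam(\supp(\mu_0)\cup \supp(\mu_1^n))\to D$. For each $n$ pick $\pi^n\in \Pi_{opt}(\mu_0,\mu_1^n)$ and disintegrate along the atomic target: $\pi^n=\sum_i c_i^n\, \mu_0^{n,i}\otimes \delta_{p_i^n}$, where $\mu_0^{n,i}=\rho_0^{n,i}\mm\ll \mm$ satisfies $c_i^n\rho_0^{n,i}\leq \rho_0$ pointwise $\mm$-a.e. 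Theorem \ref{T:goodgeo} applied to each pair $(\mu_0^{n,i},\delta_{p_i^n})$ provides $\nu_i^n\in \Opt(\mu_0^{n,i},\delta_{p_i^n})$ whose $t$-interpolants $\mu_t^{n,i}=\rho_t^{n,i}\mm$ enjoy \eqref{E:upperbound}--\eqref{E:weakMCP} with constant $D_n$. Set $\nu^n:=\sum_i c_i^n \nu_i^n$: its total cost equals $\sum_i c_i^n\int \sfd^2(x,p_i^n)\,d\mu_0^{n,i}(x) = \int \sfd^2\,d\pi^n = W_2^2(\mu_0,\mu_1^n)$, so $\nu^n\in \Opt(\mu_0,\mu_1^n)$ with marginals $\mu_0,\mu_1^n$.

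Writing $(\ee_t)_\sharp \nu^n=\rho_t^n\mm$ with $\rho_t^n=\sum_i c_i^n \rho_t^{n,i}$, each summand is controlled by $c_i^n\rho_t^{n,i}\leq \tfrac{1}{(1-t)^N}e^{D_n t\sqrt{(N-1)K^-}}\|\rho_0\|_\infty$, since $c_i^n\|\rho_0^{n,i}\|_\infty=\|c_i^n\rho_0^{n,i}\|_\infty\leq \|\rho_0\|_\infty$. The delicate step is to upgrade this individual estimate to the same bound for the sum. For this I would show that for $\mm$-a.e.\ $y\in X$ the supports $\{\supp(\mu_t^{n,i})\}_i$ essentially do not overlap, so that only a single term contributes at $y$: if two geodesics $\gamma^i\in \supp(\nu_i^n)$, $\gamma^j\in \supp(\nu_j^n)$ with $p_i^n\neq p_j^n$ met at an interior time $t$, a swap/concatenation argument together with non-branching produces two alternative geodesics whose endpoints $x_1\!\to\! p_j^n$, $x_2\!\to\! p_i^n$ give a strictly smaller cost than the original pair, contradicting the cyclical monotonicity of $\pi^n$. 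Granting essential disjointness at each $t\in(0,1)$, the pointwise bound on $\rho_t^n$ follows at once. The entropy estimate follows similarly: on disjoint supports $\E_N(\mu_t^n)=\sum_i (c_i^n)^{1-1/N}\E_N(\mu_t^{n,i})$; combining Theorem \ref{T:goodgeo}'s entropy bound for each piece with the elementary subadditivity $\sum_i (c_i^n\rho_0^{n,i})^{1-1/N}\ge \rho_0^{1-1/N}$ gives $\E_N(\mu_t^n)\geq (1-t)e^{-D_n t\sqrt{(N-1)K^-}/N}\E_N(\mu_0)$.

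To close, $\{\nu^n\}$ is tight in $\P(\Geo(X))$ because supported on geodesics of length $\leq D_n$ inside a common compact set; extract $\nu^n\weak \nu$. Continuity of $\ee_0,\ee_1$ and $W_2$-stability of optimality yield $\nu\in \Opt(\mu_0,\mu_1)$, and $(\ee_t)_\sharp \nu^n\weak (\ee_t)_\sharp \nu =:\mu_t$. Lower semicontinuity of the $L^\infty$ norm under weak convergence of measures with uniformly bounded densities yields \eqref{E:upperbound} in the limit (with $D_n\to D$), and upper semicontinuity of $\E_N$ under $W_2$-convergence yields \eqref{E:weakMCP}.

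\textbf{Main obstacle.} The crux is Step 3, the essential disjointness of the supports of the $\mu_t^{n,i}$. Since $\mu_1^n$ is atomic, the essentially non-branching hypothesis does not apply verbatim to $\nu^n$, so one must instead establish an ad hoc cyclical-monotonicity-plus-non-branching lemma for (a.c.)-to-(atomic) optimal plans, isolating an $\mm$-full subset of initial points on which the swap argument rigorously produces a contradiction.
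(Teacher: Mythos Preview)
Your overall strategy---approximate $\mu_1$ by finite atomic measures, assemble piecewise the good geodesics from Theorem~\ref{T:goodgeo}, and pass to the limit using upper semicontinuity of $\E_N$ and stability of $L^\infty$ bounds---is exactly the paper's route. The compactness/limit argument and the entropy computation via subadditivity of $s\mapsto s^{1-1/N}$ are fine.

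The genuine gap is precisely where you place it, but your proposed resolution does not work. The swap argument you sketch does \emph{not} produce a strictly smaller cost: if $\gamma^i_t=\gamma^j_t$, the concatenated competitors have cost at most equal to the original (with equality forced by cyclical monotonicity, and then $\ell(\gamma^i)=\ell(\gamma^j)$ and the concatenations are geodesics). So cyclical monotonicity alone gives nothing. In a genuinely non-branching space one would then conclude that the two geodesics sharing the segment $[0,t]$ and ending at $p_i^n\neq p_j^n$ cannot both exist; but here the space is only \emph{essentially} non-branching, which is a statement about optimal dynamical plans between absolutely continuous measures, not about individual geodesics. A pointwise/pairwise swap argument cannot access that hypothesis.

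What is actually needed---and what the paper does---is to first prove that the optimal plan from $\mu_0\ll\mm$ to a finite sum of Dirac masses is unique and induced by a map (Proposition~\ref{prop:findelta}). This is obtained by contradiction via the Rajala--Sturm mixing procedure: assuming non-uniqueness, one uses the entropy lower bound from Theorem~\ref{T:goodgeo} to find a small time $\tau$ at which $\mm(\{\rho^1_\tau>0\}\cap\{\rho^2_\tau>0\})>0$, then disintegrates and glues left/right halves to manufacture an optimal dynamical plan $\nu^{\mathrm{mix}}$ between \emph{absolutely continuous} marginals (after restricting to $[0,1-\varepsilon]$) that is not concentrated on a non-branching set. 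Only at this stage can essential non-branching be invoked. The disjointness $\mm(\{\rho_t^{n,i}>0\}\cap\{\rho_t^{n,j}>0\})=0$ that you need then follows immediately from this uniqueness: an overlap would yield a non-map optimal plan from the overlap region to $\tfrac12(\delta_{p_i^n}+\delta_{p_j^n})$. Your ``ad hoc lemma'' is therefore not a small patch but the substantive Proposition~\ref{prop:findelta}, and its proof requires the mixing construction rather than a direct cost-swap.
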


The rough idea for proving Theorem \ref{thm:MCPmu1} is to approximate the measure $\mu_{1}$ by a convex combination of 
Dirac masses for which we know the validity of  \eqref{E:upperbound} and \eqref{E:weakMCP} thanks to the results of the previous section, and then conclude by a stability argument.  
Such a trick is not new in the literature, see for instance \cite{FigalliJul} where it is used to obtain absolute continuity of Wasserstein geodesics in the Heisenberg group 
and \cite{biacava:streconv} where is used to study $L^{1}$ Optimal transportation problems.
In order to perform such a strategy we will make use of various auxiliary results.  The first one is the following proposition, 
which constitutes a special case of the main Theorem \ref{T:1}.

\begin{proposition}\label{prop:findelta}
Let $(X,\sfd,\mm)$ be an essentially non-branching m.m.s. verifying  $\MCP(K,N)$. 
Let $\mu_{0} \in \mathcal{P}(X)$ with $\mu_{0} \ll \mm$ and with bounded support,  and  $\mu_{1}$ be a finite convex combination of Dirac masses, 
i.e. $\mu_{1}:=\sum_{j=1}^{n} \lambda_{j} \delta_{x_{j}}$ for some $\{x_{j}\}_{j=1,\ldots,n}\subset X$ with $x_{i}\neq x_{j}$ for $i\neq j$, 
and  $\{\lambda_{j}\}_{j=1,\ldots,n}\subset (0,1]$ with $\sum_{j=1}^{n} \lambda_{j}=1$.

Then  there exists a unique transference plan from $\mu_{0}$ to $\mu_{1}$ and it is induced by a map $T$, i.e.
$$
W_{2}(\mu_{0}, \mu_{1} )^{2} =\int_{X} \sfd(x,T(x))^{2} \, \mu_{0}(dx).
$$
\end{proposition}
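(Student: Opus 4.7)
My plan is to (i) combine the ``good plan'' construction of Theorem~\ref{T:goodgeo}, applied atom by atom, to produce an optimal dynamical plan whose intermediate-time marginals are absolutely continuous; (ii) apply the essentially non-branching assumption to the time-restrictions of this plan; and (iii) exploit the fact that for $t$ close to $1$ the supports of the atomic components lie in pairwise disjoint balls around the $x_{j}$'s, so that $\mu_{0}$-a.e. starting point is matched to a single atom, yielding the desired map (and hence uniqueness).

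Concretely, fix any $\pi\in\Pi_{opt}(\mu_{0},\mu_{1})$. Atomicity of $\mu_{1}$ forces the unique decomposition $\pi=\sum_{j}\lambda_{j}(\mu_{0}^{j}\otimes\delta_{x_{j}})$ with $\sum_{j}\lambda_{j}\mu_{0}^{j}=\mu_{0}$. Since $\rho_{0}$ is essentially bounded, each $\mu_{0}^{j}=\rho_{0}^{j}\mm$ has $\rho_{0}^{j}\leq\rho_{0}/\lambda_{j}$ and bounded support, so Theorem~\ref{T:goodgeo} delivers $\tilde\nu_{j}\in\Opt(\mu_{0}^{j},\delta_{x_{j}})$ with $(\ee_{t})_{\sharp}\tilde\nu_{j}\ll\mm$ for all $t\in[0,1)$. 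Setting $\tilde\nu:=\sum_{j}\lambda_{j}\tilde\nu_{j}$ produces a plan in $\Opt(\mu_{0},\mu_{1})$ (its induced transport plan is precisely $\pi$) with $(\ee_{t})_{\sharp}\tilde\nu\ll\mm$ for every $t\in[0,1)$. Choose $t_{\ast}\in(0,1)$ such that $2(1-t_{\ast})D<\min_{i\neq j}\sfd(x_{i},x_{j})$, where $D:=\diam(\supp\mu_{0}\cup\{x_{1},\ldots,x_{n}\})$; then for $t\in[t_{\ast},1)$ the closed balls $\bar B_{j}^{t}:=\bar B(x_{j},(1-t)D)$ are pairwise disjoint and $\supp(\ee_{t})_{\sharp}\tilde\nu_{j}\subset\bar B_{j}^{t}$. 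For every $t\in(0,1)$, the restriction $(\mathrm{restr}_{0}^{t})_{\sharp}\tilde\nu$ belongs to $\Opt(\mu_{0},(\ee_{t})_{\sharp}\tilde\nu)$ between two absolutely continuous measures, so essentially non-branching supplies a non-branching Borel set $\Gamma_{t}\subset\Geo(X)$ of full $(\mathrm{restr}_{0}^{t})_{\sharp}\tilde\nu$-measure, and $\Gamma_{t}$ decomposes as $\bigsqcup_{j}\Gamma_{t}^{j}$ according to which ball contains the time-$t$ endpoint.

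The main obstacle lies in step (iii): non-branching of $\Gamma_{t}$ only tells us that two full geodesics in $\supp\tilde\nu$ sharing any positive-length initial segment must coincide, and a priori does not rule out distinct geodesics starting at a common $x$ and splitting immediately toward different $x_{j}$'s. To exclude such pointwise branching on a $\mu_{0}$-positive set I plan to combine (a) the disjoint landing balls $\bar B_{j}^{t}$, which separate targets as soon as $t\geq t_{\ast}$, (b) the $L^{\infty}$-bound \eqref{E:upperbound} of Theorem~\ref{T:goodgeo} applied separately to the atomic sub-plans on the shrinking balls $\bar B_{j}^{t}$, and (c) a restriction-and-redistribution argument that feeds a putative positive-measure set of branching starting points back into essentially non-branching at intermediate times to produce a contradiction. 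Once pointwise uniqueness of the target is established on a full-$\mu_{0}$-measure set, one obtains a Borel partition $X=\bigsqcup_{j}A_{j}$ with $\lambda_{j}\mu_{0}^{j}=\mu_{0}\llcorner A_{j}$, and $T(x):=x_{j}$ on $A_{j}$ satisfies $\pi=(\mathrm{Id},T)_{\sharp}\mu_{0}$. Since the partition depends only on $\mu_{0}$ and the configuration $\{x_{j},\lambda_{j}\}$ and not on the $\pi$ chosen at the start, uniqueness of the optimal transference plan follows as well.
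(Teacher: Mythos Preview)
Your proposal has a genuine gap, and you have identified it yourself: step (iii) is not a proof. Essentially non-branching applied to $(\mathrm{restr}_{0}^{t})_{\sharp}\tilde\nu$ only forbids geodesics that agree on a nontrivial initial segment, and two geodesics from a common $x$ heading to distinct $x_{i},x_{j}$ may split at time $0$ without violating this. Your items (a)--(c) are a plan, not an argument; in particular (c), ``a restriction-and-redistribution argument that feeds \ldots back into essentially non-branching,'' is precisely the hard part and you have not said what it is. Looking at large $t$, where the supports separate into disjoint balls, actually moves in the wrong direction: disjoint supports at time $t$ give you no leverage against branching at time $0$.

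The paper's argument is structurally different and supplies exactly the missing mechanism. It works with Kantorovich potentials: letting $S$ be the set of $x$ whose $c$-superdifferential is not a singleton, one assumes $\mu_{0}(S)>0$ and reduces to two fixed atoms $x_{1}\neq x_{2}$. One then replaces $\mu_{0}$ by the (bounded-density) $\mm\llcorner_{S}/\mm(S)$, takes the good plans $\nu^{1},\nu^{2}$ of Theorem~\ref{T:goodgeo} to $\delta_{x_{1}},\delta_{x_{2}}$, and uses the entropy inequality \eqref{E:weakMCP} with Jensen to obtain $\liminf_{t\to 0}\mm(\{\rho_{t}^{i}>0\})\geq \mm(S)$. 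Combined with $\mm(S^{\varepsilon})\to\mm(S)$, this forces $\mm(\{\rho_{\tau}^{1}>0\}\cap\{\rho_{\tau}^{2}>0\})>0$ for some \emph{small} $\tau>0$. The key step is then the Rajala--Sturm mixing procedure: disintegrate the left and right halves of $(\nu^{1}+\nu^{2})/2$ at time $\tau$ and glue them via the common midpoint to produce $\nu^{\mathrm{mix}}$. Cyclical monotonicity forces equal lengths, so $\nu^{\mathrm{mix}}$ is optimal, has absolutely continuous marginals (after a further restriction), and is genuinely branching at time $\tau$ on the overlap set---contradicting essential non-branching. This overlap-at-small-time plus mixing is the idea your step (iii) is missing.

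Two smaller points: you invoke ``$\rho_{0}$ essentially bounded'' to apply Theorem~\ref{T:goodgeo}, but this is not among the hypotheses of the proposition; the paper avoids this by passing to $\mm\llcorner_{S}/\mm(S)$. And your final uniqueness claim is circular as written: the partition $\{A_{j}\}$ was constructed from the decomposition of the \emph{given} $\pi$, so it is not a priori independent of $\pi$. Once one proves instead that $\mu_{0}(S)=0$ for the Kantorovich-potential set $S$, uniqueness follows immediately because every optimal plan is concentrated on the graph $\{(x,\Gamma(x))\}$.
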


%\footnote{AM: If we could prove that in Proposition \ref{prop:findelta} not only the transference plan by the dynamical plan $\nu \in \Opt(\mu_{0},\mu_{1})$ is given by a map then it would be unique. Therefore weak MCP would be equivalent to strong MCP and we could use in all the statements just weak MCP. Notice that it would suffice such a statement for $\mu_{1}$ equal to a general Dirac mass}

\begin{proof}
We divide the proof in steps. 
\medskip

{\bf Step 1.}\\
Consider a couple of Kantorovich potentials $\f,\f^{c}$ associated with the transport problem from $\mu_{0}$ to $\mu_{1}$, the sets 
\begin{equation}\label{def:GammaGammax}
\Gamma = \left\{ (x,y) \in X \times X \colon \f(x) + \f^{c}(y) = \frac{\sfd^{2}(x,y)}{2} \right\}, \qquad \Gamma(x) : =P_{2} \Big(\Gamma \cap ( \{x\}   \times X ) \Big), 
\end{equation}
and $S$ the set of those $x \in X$ such that $\Gamma(x)$ is not a singleton. Note that the set $S$ is analytic. 
It will be enough to prove the stronger statement $\mu_{0}(S) = 0$. 

So suppose by contradiction $\mu_{0}(S) > 0$.  Since $\mu_{1}$ is a finite sum of Dirac masses, up to taking a smaller $S$ and up to relabelling the points $x_{j}$,  we can assume that there exist
$$
T_{1},T_{2} : S \to X, \qquad \text{graph}(T_{1}), \ \text{graph}(T_{2}) \subset \Gamma,
$$
both $\mu_{0}$-measurable with $T_{1}(x)=x_{1}$ and $T_{2}(x)=x_{2}$ for all $x \in S$, with $x_{1} \neq x_{2}$ and $S$ is bounded.
\medskip

{\bf Step 2.} \\
With no loss of generality we can assume $\mu_{0}$ to be restricted and renormalised to $S$. 
In particular we redefine $\mu_{0} : = \mm\llcorner_{S}/\mm(S)$. 
Let $\nu^{1} \in \Opt(\mu_{0},\delta_{x_{1}})$ and $\nu^{2} \in \Opt(\mu_{0},\delta_{x_{2}})$ given by Theorem \ref{T:goodgeo}. 
Note  that necessarily 
$$
\nu^{1} \perp \nu^{2};
$$
indeed for $i = 1,2$ it holds $\nu^{i}(\{\gamma \colon \gamma_{1}=x_{i} \}) = 1$ and by construction $x_{1}\neq x_{2}$. 
\\In particular it holds for $i = 1,2$ 
\begin{equation} \label{eq:rhotitau1}
\int (\rho_{t}^{i})^{1-1/N} \, \mm \geq (1-t) e^{ - D t  \frac{\sqrt{ (N-1)K^{-}}}{N} } \int \rho_{0}^{1-1/N} \, \mm(dx) 
=(1-t) e^{ - D t  \frac{\sqrt{ (N-1)K^{-}}}{N} }  \mm(S)^{1/N} ,
\end{equation}
for any $t \in [0,1)$ where we have written $(\ee_{t})_{\sharp} \nu^{i} = \rho_{t}^{i}\mm$ and $D = \diam(S \cup \{ x_{1}, x_{2} \})$.
Then by  Jensen's inequality we get
\begin{eqnarray}
\int_{X} (\rho_{t}^{i})^{1-1/N} \, \mm  &= & \mm(\{\rho_{t}^{i}>0\}) \int_{\{\rho_{t}^{i}>0\}} (\rho_{t}^{i})^{1-1/N} \, \frac{\mm}{\mm(\{\rho_{t}^{i}>0\})}  \nonumber\\
                                                          &\leq&   \mm(\{\rho_{t}^{i}>0\})  \left( \int_{\{\rho_{t}^{i}>0\}} \rho_{t}^{i} \, \frac{\mm}{\mm(\{\rho_{t}^{i}>0\})}  \right)^{1-1/N}    \nonumber\\
                                                          &=&   \mm(\{\rho_{t}^{i}>0\})^{1/N},  \nonumber
\end{eqnarray}
which, combined with \eqref{eq:rhotitau1}, gives

\begin{equation} \label{eq: mmrhot>01}
\liminf_{t\to 0} \mm \left(\{ \rho_{t}^{i} > 0 \} \right) \geq\mm \left(S\right) = \mm \left(\{ \rho_{0}^{i} > 0 \} \right). 
\end{equation}
Denote now by $S^{\varepsilon}:=\{x \in X\,:\, \sfd(x,y)\leq \varepsilon \text{ for some } y \in S\}$ the $\varepsilon$-tubular neighbourhood of $S$ and observe  that,  by Dominated Convergence Theorem, we have $\lim_{\varepsilon\to 0} \mm(S^{\varepsilon})=\mm(S)$. In particular there exists $\varepsilon_{0}>0$ such that 
\begin{equation}\label{eq:3/21}
\mm(S^{\varepsilon_{0}}) \leq \frac{3}{2} \mm(S).
\end{equation}
We now claim that there  exists  a small positive time $\tau > 0$, such that 
\begin{equation}\label{eq:tCharTau11}
\mm \left( \{ \rho_{\tau}^{1} > 0 \} \cap \{ \rho_{\tau}^{2} > 0 \} \right) > 0.
\end{equation}
To this aim notice that, by construction, for  $\mu_{t}^{i}$-a.e. $x \in X$ there exists a geodesic $\gamma \in \Geo (X)$ such that $x=\gamma_{t}$ and $\gamma_{0}\in S$; in particular, for $t \in [0, \varepsilon_{0}]$ the measure $\mu_{t}^{i}$ is concentrated on $S^{\varepsilon_{0}}$.
But then the combination of  \eqref{eq: mmrhot>01} and  \eqref{eq:3/21} implies that there exists $\tau\in (0, \varepsilon_{0})$ satisfying the claim \eqref{eq:tCharTau11}.

\medskip

{\bf Step 3.}\\ 
Note  that  $(\ee_{0},\ee_{1})_{\sharp}(\nu^{1} + \nu^{2})/2$ is an optimal transference plan; indeed 
$$
(\ee_{0},\ee_{1})_{\sharp}(\nu^{1} + \nu^{2})/2 \in \Pi(\mu_{0}, (\delta_{x_{1}}+\delta_{x_{2}})/2)
$$
and since the graph of both $T_{1}$ and $T_{2}$ are subsets of $\Gamma$,  necessarily the transference plan $[(Id,T_{1})_{\sharp} \mu_{0}  +  (Id,T_{2})_{\sharp} \mu_{0} ]/2$ is optimal.
It follows that  
$$
W_{2}(\mu_{0}, (\delta_{x_{1}}+\delta_{x_{2}})/2)^{2} = \frac{1}{2}\sum_{i = 1,2}\int \sfd^{2}(x,y) (Id,T_{i})_{\sharp} \mu_{0} 
= \frac{1}{2}\sum_{i = 1,2}\int \sfd^{2}(x,y) (\ee_{0},\ee_{1})_{\sharp} \nu^{i}. 
$$
The same argument also ensures  that there exists a set $\bar \Gamma \subset \Geo(X)$ 
such that $\{ (\gamma_{0},\gamma_{1}) \colon \gamma \in \bar \Gamma \}$ is $\sfd^{2}$-cyclically monotone (since it is contained in $\Gamma$ which is $\sfd^{2}$-cyclically monotone) 
and $\nu^{1}(\bar \Gamma) = \nu^{2}(\bar \Gamma) = 1$.
In particular, for $t \in (0,1)$ also $(\ee_{0},\ee_{t})_{\sharp}(\nu^{1} + \nu^{2})/2$ is an optimal transference plan and, moreover from Theorem \ref{T:goodgeo}, it follows that 
$(\ee_{t})_{\sharp}(\nu^{1} + \nu^{2})/2$  is absolutely continuous with respect to $\mm$.

We now reach a contradiction obtaining a branching dynamical transference plan between  $(\ee_{0})_{\sharp}(\nu^{1} + \nu^{2})/2$ and $(\ee_{t})_{\sharp}(\nu^{1} + \nu^{2})/2$.
This last part of the proof is strongly inspired by  a clever  mixing procedure performed in  \cite[Corollary 1.4]{RS2014} 
(notice there are some slight differences though).

\medskip

Let $\tau\in (0,1)$ be given by \eqref{eq:tCharTau11} of Step 2 and after using the restriction map, we can also assume that 
$(\ee_{1})_{\sharp}(\nu^{1} + \nu^{2})/2$  is absolutely continuous with respect to $\mm$. Define 
$$
\nu^{\text{left}} = \frac{1}{2} \left(  (\text{restr}^{\tau}_{0})_{\sharp} \nu^{1} + (\text{restr}^{\tau}_{0})_{\sharp} \nu^{2}  \right), 
\qquad 
\nu^{\text{right}} = \frac{1}{2} \left(  (\text{restr}^{1}_{\tau})_{\sharp} \nu^{1} + (\text{restr}^{1}_{\tau})_{\sharp} \nu^{2}  \right). 
$$
Observe that 
$$
(\ee_{1})_{\sharp} \nu^{\text{left}} =  (\ee_{0})_{\sharp} \nu^{\text{right}},
$$
and denote this measure by $\alpha$. We then  consider the associated disintegrations  
$$
\nu^{\text{left}} = \int \nu^{\text{left}}_{x} \, \alpha(dx), \qquad  \nu^{\text{right}} = \int \nu^{\text{right}}_{x} \, \alpha(dx);
$$
in other words $\{\nu^{\text{left}}_{x}\}$ (resp. $\{\nu^{\text{right}}_{x}\}$) is the disintegration of $\nu^{\text{left}}$ (resp. $\nu^{\text{right}}$) with respect to $\ee_{1}$ (resp. $\ee_{0}$). 
The next step is to glue together $\nu^{\text{left}}_{x}$ to $\nu^{\text{right}}_{x}$: consider the map  
$$
Gl : \{ (\gamma^{1},\gamma^{2}) \in C([0,1];X) \times C([0,1];X) \colon \gamma^{1}_{1} = \gamma^{2}_{0} \} \to C([0,1];X), 
$$
defining $Gl(\gamma^{1},\gamma^{2})$ to be equal to $\gamma^{1}_{2s}$ if $0 \leq s \leq 1/2$ and equal to $\gamma^{2}_{2s - 1}$ when $s \geq 1/2$.
We can then set
$$
\nu^{\text{mix}} := \int \nu_{x} \, \alpha(dx), \qquad \nu_{x} : = Gl_{\sharp}(\nu_{x}^{\text{left}}, \nu_{x}^{\text{right}}).
$$
By construction, the measure $\nu^{\text{mix}}$ is concentrated on the set $\tilde{\Gamma}$ defined by
$$
\tilde{\Gamma}:=\{\gamma \in C([0,1]; X) \colon \exists \ \gamma^{1},\gamma^{2} \in \bar \Gamma \colon  \text{restr}_{0}^{\tau} \gamma = \text{restr}_{0}^{\tau} \gamma^{1}, 
\text{restr}_{\tau}^{1} \gamma = \text{restr}_{\tau}^{1} \gamma^{2} \}.
$$
Recalling that  $(\ee_{0}, \ee_{1})(\bar{\Gamma})$ is $\sfd^{2}$-cyclically monotone and using the triangular inequality at  time $\tau$, for $\gamma^{1}, \gamma^{2} \in \bar \Gamma$ with $\gamma^{1}_{\tau} = \gamma^{2}_{\tau}$ we get 
\begin{align*}
\sfd^{2}(\gamma^{1}_{0},\gamma^{1}_{1}) + \sfd^{2}(\gamma^{2}_{0},\gamma^{2}_{1}) 
& \leq  \sfd^{2}(\gamma^{1}_{0},\gamma^{2}_{1}) + \sfd^{2}(\gamma^{2}_{0},\gamma^{1}_{1}) \crcr
& \leq \Big( \tau \ell(\gamma^{1})  + (1-\tau) \ell(\gamma^{2})\Big)^{2} + \Big( \tau \ell(\gamma^{2})  + (1-\tau) \ell(\gamma^{1})\Big)^{2} \crcr
& = \ell(\gamma^{1})^{2}  +  \ell(\gamma^{2})^{2} - 2\tau(1-\tau) \Big( \ell(\gamma^{1})  -  \ell(\gamma^{2})\Big)^{2} \crcr
& \leq \ell(\gamma^{1})^{2}  +  \ell(\gamma^{2})^{2}  \crcr
& =\sfd^{2}(\gamma^{1}_{0},\gamma^{1}_{1}) + \sfd^{2}(\gamma^{2}_{0},\gamma^{2}_{1}),
\end{align*}
where of course  $\ell(\gamma)$ denotes the length of the curve $\gamma$.
It follows that all the previous inequalities are identities;  in particular $\ell(\gamma^{1})= \ell(\gamma^{2})$ and  for $\alpha$-a.e. $x$ there exists $\ell_{x}\geq 0$ such that $\nu_{x}$ is concentrated on geodesics of length $\ell_{x}$.  
We then infer that
\begin{align*}
 \int \sfd^{2}(\gamma_{0},\gamma_{1}) \ \nu^{\text{mix}}(d\gamma)   &=   \int \ell_{x}^{2} \ \ee_{\tau\,\sharp}(\nu^{\text{mix}})(dx) =   \frac{1}{2}\int \ell_{x}^{2} \ \ee_{\tau\,\sharp}(\nu^{1} + \nu^{2})(dx) \crcr
& = \frac{1}{2}\int \sfd^{2}(\gamma_{0},\gamma_{1}) (\nu^{1} + \nu^{2})(d\gamma) .
\end{align*}
Since $\nu^{\text{mix}}$ has the same marginals as $(\nu^{1} + \nu^{2})/2$, and  the latter  is optimal, we conclude that $\nu^{\text{mix}}$  is optimal too.  

We now reach a contradiction with the essentially non  branching assumption by showing that $\nu^{\text{mix}}$ is not concentrated on 
a set of non-branching geodesics. To this aim recall that 
$$
\alpha (\{ \rho_{\tau}^{1} > 0 \} \cap \{ \rho_{\tau}^{2} > 0 \} )> 0,
$$
and that $(\ee_{1})_{\sharp}\nu_{1}=\delta_{x_{1}} \perp \delta_{x_{2}}=(\ee_{2})_{\sharp} \nu_{2}$. Therefore, for $\alpha$-a.e. $x \in \{ \rho_{\tau}^{1} > 0 \} \cap \{ \rho_{\tau}^{2} > 0 \}$  the measure $\nu^{\text{right}}_{x}$ is not a Dirac mass, and thus the dynamical optimal plan $\nu^{\text{mix}}$ is not concentrated on a set of non-branching geodesics.
Then (since in Definition \ref{def:essNB} it is required that $\mu_{1} \ll \mm$) a contradiction is obtained by restricting $\nu^{mix}$ to some interval $[0,1-\ve]$ such that 
$(e_{1-\ve})_{\sharp}\nu_{1},(e_{1-\ve})_{\sharp}\nu_{2} \ll \mm$ and $(e_{1-\ve})_{\sharp}\nu_{1} \perp (e_{1-\ve})_{\sharp}\nu_{2}$. 
The existence of such $\ve >0$ follows from Theorem \ref{T:goodgeo} and the convergence of $\supp((\ee_{1-\ve})_{\sharp}\nu_{i})$ to $x_{i}$ in the Hausdorff distance as $\ve \to 0$.
The claim follows.
\end{proof}

\begin{proposition}\label{prop:MCPfindelta}
Let $(X,\sfd,\mm)$ be an essentially non-branching  m.m.s.  satisfying $\MCP(K,N)$; 
let $\mu_{0}=\rho_{0} \mm$ and $\mu_{1}$ be as in Proposition  \ref{prop:findelta} with $\rho_{0}$ essentially bounded.  

Then there exists $\nu \in \Opt(\mu_{0}, \mu_{1})$ with  $(\ee_{t})_{\sharp}\nu =\mu_{t}= \rho_{t} \mm \ll \mm$ for any $t \in [0,1)$  
satisfying \eqref{E:upperbound} and \eqref{E:weakMCP}.
%
%\begin{equation}\label{E:MCPfindelta}
%\int \rho_{t}^{1-1/N} \, \mm \geq (1-t) e^{ - D t  \frac{\sqrt{ (N-1)K^{-}}}{N} } \int  \rho_{0}^{1-1/N} \, \mm,  \quad \forall t \in [0,1),
%\end{equation}
%
%where we set $D := \diam (\supp(\mu_{0}) \cup \supp(\mu_{1}))$.
\end{proposition}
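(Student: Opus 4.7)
The strategy is to use the uniqueness of the optimal map provided by Proposition~\ref{prop:findelta}, decompose the transport along the fibres of this map, apply Theorem~\ref{T:goodgeo} to each fibre separately, and then paste the pieces together. By Proposition~\ref{prop:findelta} the unique optimal plan is induced by a Borel map $T$; setting $E_j := T^{-1}(x_j)$ and $\mu_0^j := \mu_0\llcorner_{E_j}$, one obtains a decomposition $\mu_0 = \sum_{j=1}^n \mu_0^j$ with $\mu_0^j(X) = \lambda_j$. Normalising $\hat\mu_0^j := \lambda_j^{-1}\mu_0^j$ (whose density $\lambda_j^{-1}\rho_0\ind_{E_j}$ is essentially bounded) and applying Theorem~\ref{T:goodgeo} to $\hat\mu_0^j$ and $\delta_{x_j}$ yields $\hat\nu^j \in \Opt(\hat\mu_0^j, \delta_{x_j})$ with time-$t$ marginals $\hat\rho_t^j\mm$ satisfying \eqref{E:upperbound} and \eqref{E:weakMCP}. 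Then $\nu := \sum_j \lambda_j \hat\nu^j$ has the correct endpoints and $(\ee_0,\ee_1)_\sharp \nu = (Id,T)_\sharp\mu_0$ is the unique optimal plan, so $\nu \in \Opt(\mu_0,\mu_1)$.

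Writing $\rho_t := \sum_j \lambda_j \hat\rho_t^j$, I next observe that, \emph{if} the sets $\{\hat\rho_t^j > 0\}$ are pairwise $\mm$-essentially disjoint for every $t \in (0,1)$, then \eqref{E:upperbound} for $\rho_t$ follows at once from the pointwise bound on each $\hat\rho_t^j$: the factor $\lambda_j^{-1}$ coming from $\|\hat\rho_0^j\|_\infty \le \lambda_j^{-1}\|\rho_0\|_\infty$ cancels against the multiplicative $\lambda_j$ in the sum. Similarly, disjointness turns $\int \rho_t^{1-1/N}\mm$ into $\sum_j \lambda_j^{1-1/N}\int (\hat\rho_t^j)^{1-1/N}\mm$, to which \eqref{E:weakMCP} applied fibrewise gives $(1-t)\exp(-Dt\sqrt{(N-1)K^-}/N)\sum_j \lambda_j^{1-1/N}\int (\hat\rho_0^j)^{1-1/N}\mm$; using the scaling $\hat\rho_0^j = \lambda_j^{-1}\rho_0\ind_{E_j}$, the $\lambda_j$ factors cancel exactly and one recovers $(1-t)\exp(-Dt\sqrt{(N-1)K^-}/N)\E_N(\mu_0)$, which is \eqref{E:weakMCP} for $\mu_t$.

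The heart of the proof, and the main obstacle, is therefore to establish the disjointness of supports, for which I would replicate the mixing construction from Step~3 of the proof of Proposition~\ref{prop:findelta}. Suppose for contradiction that for some $t \in (0,1)$ and indices $j \ne k$ one has $\mm(\{\hat\rho_t^j > 0\} \cap \{\hat\rho_t^k > 0\}) > 0$. The combined probability plan $(\lambda_j+\lambda_k)^{-1}(\lambda_j\hat\nu^j + \lambda_k\hat\nu^k)$ is optimal between its marginals, as its endpoint coupling is a restriction of $(Id,T)_\sharp\mu_0$ and hence is supported on a $\sfd^2$-cyclically monotone set. I split this plan at time $t$ into a left and a right half, disintegrate each against the common time-$t$ marginal, and glue the conditional pieces via the map $Gl$ to build a mixed dynamical plan $\nu^{\mathrm{mix}}$. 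The triangle-inequality chain worked out in Proposition~\ref{prop:findelta} then shows $\nu^{\mathrm{mix}}$ remains optimal; meanwhile, on the positive-measure overlap the right halves come from plans targeting the distinct Dirac masses $\delta_{x_j}$ and $\delta_{x_k}$, so $\nu^{\mathrm{mix}}$ is not concentrated on non-branching geodesics. Restricting to a subinterval $[0,1-\varepsilon]$ on which both endpoints are absolutely continuous with disjoint supports (by Theorem~\ref{T:goodgeo} and the Hausdorff convergence $\supp((\ee_{1-\varepsilon})_\sharp \hat\nu^j) \to \{x_j\}$ as $\varepsilon \to 0$) contradicts the essentially non-branching hypothesis, completing the argument.
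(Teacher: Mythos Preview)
Your proof is correct and follows the same overall strategy as the paper: decompose $\mu_{0}$ along the fibres $E_{j}=T^{-1}(x_{j})$ of the unique optimal map from Proposition~\ref{prop:findelta}, apply Theorem~\ref{T:goodgeo} to each piece, and paste; the disjointness of the time-$t$ supports $\{\hat\rho_{t}^{j}>0\}$ is the only nontrivial point, and once established it gives both \eqref{E:upperbound} and \eqref{E:weakMCP} by the scaling computation you wrote.

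The one difference worth noting is how you justify that disjointness. You re-run the full mixing construction from Step~3 of Proposition~\ref{prop:findelta}. The paper instead invokes Proposition~\ref{prop:findelta} directly as a black box: if $\mm(\{\hat\rho_{\tau}^{i}>0\}\cap\{\hat\rho_{\tau}^{j}>0\})>0$ for some $\tau\in(0,1)$, then for $\mm$-a.e.\ $y$ in this overlap both $(y,x_{i})$ and $(y,x_{j})$ lie in the $\sfd^{2}$-cyclically monotone set carrying $(\ee_{\tau},\ee_{1})_{\sharp}\nu$, so the product plan from $\mm\llcorner_{\{\hat\rho_{\tau}^{i}>0\}\cap\{\hat\rho_{\tau}^{j}>0\}}$ (renormalised) to $\tfrac{1}{2}(\delta_{x_{i}}+\delta_{x_{j}})$ is an optimal plan not induced by a map, contradicting Proposition~\ref{prop:findelta}. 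This is shorter and avoids repeating the gluing/triangle-inequality machinery; your route works too but is heavier than necessary here.
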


\begin{proof}
Let $T$ be the optimal map from $\mu_{0}$ to $\mu_{1}$ given by Proposition \ref{prop:findelta}. 
We can define then 
$$
\mu_{0}^{j} : = \mu_{0}\llcorner_{T^{-1}(x_{j})}. 
$$
Then by Theorem \ref{T:goodgeo} we deduce the existence of a dynamical optimal plan $\nu^{j} \in \Opt( \lambda_{j}^{-1}\mu_{0}^{j},\delta_{x_{j}})$ verifying \eqref{E:upperbound} and 
\eqref{E:weakMCP}.
Then we define  $\nu=\sum_{j=1}^{n} \lambda_{j} \nu^{j}$; observe that 
by definition $(\ee_{0})_{\sharp} \nu=\mu_{0}$ and $(\ee_{1})_{\sharp} \nu=\mu_{1}$; moreover $\nu \in \Opt(\mu_{0},\mu_{1})$.  Let us write $\rho^{j}_{t}\mm:= (\ee_{t})_{\sharp} \nu^{j}$ for $j=1, \ldots,n$ and $t \in [0,1)$. It is then clear that $(\ee_{t})_{\sharp} \nu \ll \mm$ for every $t \in [0,1)$ and $\rho_{t} \mm:=(\ee_{t})_{\sharp} \nu^{j}=\left(\sum_{j=1}^{n} \lambda_{j} \rho^{j}_{t}\right) \mm$.

 We first claim that $\rho_{t}$ satisfies   the upper bound  \eqref{E:upperbound} . To this aim, since every $\rho^{j}_{t}$ satisfies the upper bound  \eqref{E:upperbound},  it is enough to observe that   
\begin{equation}\label{eq: rhotjrhoti}
\mm( \{\rho^{i}_{t}>0\} \cap  \{\rho^{j}_{t}>0\})=0 \quad \forall t \in (0,1), \quad \forall i\neq j;
\end{equation}
indeed if for some $\tau\in (0,1)$ it holds $\mm( \{\rho^{i}_{\tau}>0\} \cap  \{\rho^{j}_{\tau}>0\})>0$ for some $i\neq j$, 
then this would contradict Proposition \ref{prop:findelta}: there would exists 
a transport plan from $\mm\llcorner_{\{\rho^{i}_{t}>0\} \cap  \{\rho^{j}_{t}>0\}}$ to $\delta_{x_{i}} + \delta_{x_{j}}$ (both renormalised) not induced by a map.

Let us now show the validity of \eqref{E:weakMCP}. Recalling  \eqref{eq: rhotjrhoti} and since by construction  $\mu_{0}^{i}\perp \mu_{0}^{j}$ for $i \neq j$, then 
\begin{equation}\label{eq:nujnu}
\E_{N} ((\ee_{t})_{\sharp} \nu)= \int \Big(\sum_{j=1}^{n} \rho^{j}_{t}\Big)^{1-1/N} \, \mm  =  \sum_{j=1}^{n}  \int \big( \rho^{j}_{t}\big)^{1-1/N} \, \mm =  \sum_{j=1}^{n} \E_{N} ((\ee_{t})_{\sharp} \nu^{j}).
\end{equation}
Since by construction  each $\nu^{j}$ satisfies \eqref{E:weakMCP},  we conclude that   $\mu_{t}:= (\ee_{t})_{\sharp} \nu$ satisfies \eqref{E:weakMCP} as well.
\end{proof}

\medskip

In the proof of Theorem \ref{thm:MCPmu1} we will make use of the following compactness lemma  (compare with  \cite[Proposition 4.8]{GMSProc}).

\begin{lemma}\label{lem:CompNun}
Let $(X,\sfd)$ be a complete, proper and separable metric space and $\mu^{n}_{0}, \mu^{n}_{1} \subset   \P(X)$ with uniformly bounded supports.  For every $n\in \N$, let $\nu^{n}\in \Opt(\mu^{n}_{0}, \mu^{n}_{1})$. Then there exist a subsequence $n_{k}$ such that the following holds:
\begin{enumerate}
\item There exist $\mu_{0}, \mu_{1}\in \P(X)$ with bounded support such that  $\mu^{n_{k}}_{0}\rightharpoonup \mu_{0}, \mu^{n_{k}}_{1} \rightharpoonup \mu_{1}$ in $W_{2}$;
\item There exist $\nu \in \Opt(\mu_{0}, \mu_{1})$ such that for every $t \in [0,1] \cap \Q$ 
it holds $(\ee_{t})_{\sharp} \nu^{n_{k}} \rightharpoonup (\ee_{t})_{\sharp} \nu$ in $W_{2}$.
\end{enumerate}
\end{lemma}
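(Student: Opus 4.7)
The plan is to run a standard compactness argument in the space of probability measures on geodesics, exploiting properness of $(X,\sfd)$ and the uniform boundedness of the supports. First I would fix a compact set $K\subset X$ containing $\supp(\mu_{0}^{n})\cup \supp(\mu_{1}^{n})$ for every $n$, which exists by properness and the uniform boundedness hypothesis. Then the marginal families $\{\mu_{0}^{n}\},\{\mu_{1}^{n}\}\subset \P(K)$ are tight, and since on $\P(K)$ weak convergence coincides with $W_{2}$-convergence, after extracting a subsequence I obtain $\mu_{0}^{n_{k}}\rightharpoonup \mu_{0}$ and $\mu_{1}^{n_{k}}\rightharpoonup \mu_{1}$ in $W_{2}$ for some $\mu_{0},\mu_{1}\in \P(K)$. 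This gives (1).

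Next I would lift the compactness to the dynamical level. Every $\gamma$ in $\supp(\nu^{n})$ is $\diam(K)$-Lipschitz and takes values in $K$, so the family
\[
\mathcal{G}:=\{\gamma\in \Geo(X): \gamma_{0},\gamma_{1}\in K\}
\]
is equicontinuous and pointwise relatively compact, hence relatively compact in $C([0,1];X)$ by Arzelà–Ascoli; moreover $\mathcal{G}$ is closed because the Lipschitz/geodesic condition passes to uniform limits. Consequently $\{\nu^{n_{k}}\}\subset \P(\mathcal{G})$ is tight, and up to a further subsequence $\nu^{n_{k}}\rightharpoonup \nu$ weakly in $\P(C([0,1];X))$ with $\nu$ supported on $\mathcal{G}\subset \Geo(X)$.

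It remains to verify that $\nu\in \Opt(\mu_{0},\mu_{1})$. Continuity of $\ee_{0}$ and $\ee_{1}$ and weak convergence yield $(\ee_{i})_{\sharp}\nu=\mu_{i}$ for $i=0,1$. For optimality, weak convergence of $\nu^{n_{k}}$ combined with continuity of the bounded cost $\sfd^{2}\circ (\ee_{0},\ee_{1})$ on the compact set $\mathcal{G}$ gives
\[
\int \sfd^{2}(\gamma_{0},\gamma_{1})\,\nu^{n_{k}}(d\gamma)\longrightarrow \int \sfd^{2}(\gamma_{0},\gamma_{1})\,\nu(d\gamma),
\]
while the left hand sides equal $W_{2}^{2}(\mu_{0}^{n_{k}},\mu_{1}^{n_{k}})$ which converges to $W_{2}^{2}(\mu_{0},\mu_{1})$ by $W_{2}$-continuity of the marginals on a compact set. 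Hence $\nu$ realises the optimal cost; since the intermediate marginals $(\ee_{t})_{\sharp}\nu^{n_{k}}$ form $W_{2}$-geodesics and $\ee_{t}$ is continuous, applying the continuous mapping theorem at each $t$ and using lower semicontinuity of $W_{2}$ shows that $t\mapsto (\ee_{t})_{\sharp}\nu$ is a $W_{2}$-geodesic, so $\nu\in \Opt(\mu_{0},\mu_{1})$. Finally, for every $t\in [0,1]\cap \Q$, the continuous mapping theorem applied to $\ee_{t}$ delivers $(\ee_{t})_{\sharp}\nu^{n_{k}}\rightharpoonup (\ee_{t})_{\sharp}\nu$ weakly, and on the compact set $K_{t}:=\ee_{t}(\mathcal{G})$ this is equivalent to $W_{2}$-convergence, yielding (2).

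I expect the only slightly delicate point to be ensuring that $\nu$ is supported on $\Geo(X)$ (so that one really has a \emph{dynamical} plan with Lipschitz-constant equal to the distance between endpoints) and that the intermediate marginals form a genuine $W_{2}$-geodesic; both follow from the fact that the condition $\sfd(\gamma_{s},\gamma_{t})=|t-s|\sfd(\gamma_{0},\gamma_{1})$ is closed under uniform convergence and from passing to the limit in the triangular identities. The rational restriction on $t$ in (2) is therefore harmless and could even be strengthened to all $t\in[0,1]$.
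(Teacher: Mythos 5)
Your proof is correct, but it takes a genuinely different route from the paper's. You argue at the level of the dynamical plans: compactness of the set $\mathcal{G}$ of geodesics with endpoints in a fixed compact set (Arzel\`a--Ascoli plus closedness of the geodesic condition under uniform convergence) gives tightness of $\{\nu^{n}\}$ in $\P(C([0,1];X))$, you extract a weak limit $\nu$, and you verify optimality by passing to the limit in the transport cost; the convergence of the intermediate marginals then falls out of the continuous mapping theorem, for \emph{every} $t\in[0,1]$ and not only rational $t$. The paper instead never touches $\P(C([0,1];X))$: it applies Prokhorov and a diagonal argument directly to the marginal curves $t\mapsto\mu^{n}_{t}$ over $t\in[0,1]\cap\Q$, checks that the limit curve satisfies $W_{2}(\mu_{s},\mu_{t})=|t-s|W_{2}(\mu_{0},\mu_{1})$ on the rationals, extends it by Lipschitz continuity to $[0,1]$, and only at the very end invokes the representation (lifting) theorem to produce $\nu\in\Opt(\mu_{0},\mu_{1})$. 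Your route buys a stronger conclusion (convergence of the plans themselves and of $(\ee_{t})_{\sharp}\nu^{n_{k}}$ for all $t$) and avoids the lifting theorem, at the price of the Arzel\`a--Ascoli machinery; the paper's route is lighter on infrastructure but needs the diagonal argument and the representation of the limit geodesic by a dynamical plan. One small imprecision to fix in your write-up: a geodesic with endpoints in $K$ need not take values in $K$, only in the closed $\diam(K)$-neighbourhood of $K$, which is still compact by properness (the paper makes the analogous remark about the set $B_{t}$ of $t$-midpoints being contained in a possibly larger bounded set); with $K$ enlarged accordingly, everything you wrote goes through.
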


\begin{proof}
First of all it is clear that the family  $\{\mu^{n}_{t}\}_{t\in [0,1], n\in \N}$ has uniformly compact support, where we have written $\mu^{n}_{t}:=(\ee_{t})_{\sharp}\nu^{n}$. Indeed,  by assumption  $\mu^{n}_{0}, \mu^{n}_{1}$ are concentrated in a common bounded set $B$;  clearly  the set $B_{t}$ of $t$-midpoints
$$B_{t}:=\{\gamma_{t}\,:\, \gamma_{0}, \gamma_{1} \in B\} $$
is also contained in a possibly larger bounded set, since $\sfd(\gamma_{t}, \gamma_{0})= t \sfd(\gamma_{0}, \gamma_{1})\leq \text{diam}(B)$. But, as  $\mu^{n}_{t}(B_{t})=1$, the claim follows by the properness assumption.
\\

Since the supports of $\{\mu^{n}_{t}\}_{t\in [0,1], n\in \N}$ are contained in a common compact subset, by Prokhorov Theorem and a diagonal argument,  there exists a subsequence $n_{k}$ such that   for every $t \in  [0,1]\cap \Q$ there exists $\mu_{t}\in \P(X)$  satisfying:
$$
 \mu^{n_{k}}_{t} \rightharpoonup \mu_{t} \quad \text{weakly as measures and in $W_{2}$, } \; \forall t \in [0,1]\cap \Q. 
 $$
Therefore for every $s,t\in [0,1]\cap \Q$ we have
\begin{equation}\label{eq:st}
W_{2}(\mu_{s},\mu_{t})=\lim_{k\to \infty} W_{2}(\mu^{n_{k}}_{s}, \mu^{n_{k}}_{t} )= \lim_{k\to \infty} \Big( |t-s| \; W_{2}(\mu^{n}_{0}, \mu^{n}_{1}) \Big)= |t-s| \; W_{2}(\mu_{0},\mu_{1}). 
\end{equation}
It follows that the curve $[0,1]\cap \Q \ni t \mapsto \mu_{t}\in X$ is Lipschitz and then it can be uniquely extended to $[0,1]$; in this way, \eqref{eq:st} still holds for every $s,t \in [0,1]$. But this means that the extended curve  is a $W_{2}$-geodesic which can be represented by a probability measure  $\nu \in \Opt(\mu_{0}, \mu_{1})$ satisfying (2) of the thesis. 
\end{proof}

{\bf{Proof of Theorem \ref{thm:MCPmu1}. }}
%
%{\bf Step 1.} Existence of $\nu \in \Opt(\mu_{0},\mu_{1})$ verifying   \eqref{E:MCPmu1}. 
\\Since $(X,\sfd)$ is separable, it is a classical construction to approximate $\mu_{1} \in \P(X)$ by a convex combination of Dirac masses; i.e. there exist a sequence $\{x_{j}\}_{j \in \N}\subset \supp( \mu_{1})$ and a sequence  $\{\{\lambda_{n,j}\}_{j=1}^{n}\}_{j \in \N}\subset (0,1)$ with $\sum_{j \leq n} \lambda_{n,j}=1$ such that
\begin{equation}\label{mu1ntomu1}
 \sum_{j=1}^{n} \lambda_{n,j} \, \delta_{x_{j}}=: \mu_{1}^{n}\rightharpoonup  \mu_{1} \quad \text{in } W_{2}.
\end{equation}
Since by assumption $(X,\sfd,\mm)$ is  essentially non branching and satisfies   $\MCP(K,N)$,   
by Proposition \ref{prop:MCPfindelta} we know that for every $n\in \N$ there exists $\nu^{n}\in \Opt(\mu_{0}, \mu^{n}_{1})$ 
such that  $(\ee_{t})_{\sharp} \nu^{n}=\mu^{n}_{t}=\rho^{n}_{t} \mm$ satisfies
\begin{align}
\| \rho^{n}_{t}\|_{L^{\infty}(X,\mm)} &\leq \frac{1}{(1-t)^{N}}e^{Dt\sqrt{(N-1)K^{-}}} \| \rho_{0} \|_{L^{\infty}(X,\mm)},  \quad 	\forall t \in [0,1), \label{E:UpperBoundN} \\
\E_{N}(\mu^{n}_{t}) &\geq (1-t) e^{ - D t  \frac{\sqrt{ (N-1)K^{-}}}{N} } \E_{N}(\mu_{0}), \quad  \forall t \in [0,1), \label{E:MCPn}
\end{align}
with  $D = \diam(\supp (\mu_{0}) \cup \supp (\mu_{1}))$.

Now, by Lemma \ref{lem:CompNun}, there exists a  limit $\nu \in \Opt(\mu_{0},\mu_{1})$ such that $(\ee_{t})_{\sharp}\nu^{n} \rightharpoonup (\ee_{t})_{\sharp} \nu$ 
weakly as measures, for every $t \in [0,1]\cap \Q$. Therefore, since the R\'enyi entropy is upper semi-continuous with respect to weak convergence (see for instance \cite[Lemma 1.1]{sturm:II}), 
we infer that  \eqref{E:weakMCP} holds for every $t \in [0,1]\cap \Q$.  
Using again the upper semicontinuity of the R\'enyi entropy for the left hand side, and  the continuity in $t$ of the 
right hand side we conclude that  \eqref{E:weakMCP} holds for every $t \in [0,1]$. Analogously, using \eqref{E:UpperBoundN} and that $L^{\infty}$-bounds on the density are stable under weak convergence, we conclude that  $(\ee_{t})_{\sharp} \nu=\rho_{t} \mm \ll \mm$ for every $t \in [0,1)$ and that \eqref{E:upperbound} holds with  $D = \diam(\supp (\mu_{0}) \cup \supp (\mu_{1}))$.

%\medskip

%{\bf Step 2.} Uniqueness of $\nu \in \Opt(\mu_{0},\mu_{1})$.

\hfill$\Box$

\section{Main result}

\begin{theorem}\label{T:1}
Let $(X,\sfd,\mm)$ be an essentially non-branching metric measure space verifying  $\MCP(K,N)$.
Then for any $\mu_{0},\mu_{1} \in \mathcal{P}_{2}(X)$ with $\mu_{0} \ll \mm$, 
there exists a unique transference plan and it is induced by a map $T$, i.e.  
$$
W_{2}(\mu_{0}, \mu_{1} )^{2} =\int_{X} \sfd(x,T(x))^{2} \, \mu_{0}(dx).
$$
\end{theorem}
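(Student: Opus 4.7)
The plan is to argue by contradiction, extending the strategy of Proposition \ref{prop:findelta} to a general second marginal by using Theorem \ref{thm:MCPmu1} in place of Theorem \ref{T:goodgeo}. Let $\f,\f^{c}$ be a pair of Kantorovich potentials for the transport from $\mu_{0}$ to $\mu_{1}$, and define $\Gamma$, $\Gamma(x)$ and $S$ as in \eqref{def:GammaGammax}. If $\mu_{0}(S)=0$, then $\Gamma(x)$ is a singleton for $\mu_{0}$-a.e.\ $x$, which yields a Borel map $T$ on whose graph every $\sfd^{2}$-cyclically monotone (hence every optimal) plan must be concentrated; this gives simultaneously the existence of the transport map and the uniqueness of the plan. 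So I would suppose by contradiction that $\mu_{0}(S)>0$.

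A standard measurable selection argument (taking $T_{2}$ as a Borel section of the multifunction $x\mapsto \Gamma(x)\setminus\{T_{1}(x)\}$) produces two Borel maps $T_{1},T_{2}:S\to X$ with $\gra(T_{k})\subset \Gamma$ and $T_{1}(x)\neq T_{2}(x)$ for all $x\in S$. Covering $X$ by a countable family of Borel sets $\{A_{k}\}_{k\in\N}$ of small diameter and using countable additivity, one finds indices $i\neq j$ with $\mu_{0}\bigl(T_{1}^{-1}(A_{i})\cap T_{2}^{-1}(A_{j})\bigr)>0$ and $\dist(A_{i},A_{j})>0$. After restricting to this set, intersecting with a sub-level set $\{\rho_{0}\leq M\}$ of positive measure, and finally replacing the first marginal by $\tilde\mu_{0}:=\mm\llcorner_{A}/\mm(A)$, I reduce to a setting in which $\tilde\mu_{0}$ has constant (hence essentially bounded) density, the images $(T_{k})_{\sharp}\tilde\mu_{0}$ are mutually singular with supports contained respectively in $A_{i}$ and $A_{j}$, and $(Id,T_{k})_{\sharp}\tilde\mu_{0}$ is an optimal coupling since its support lies in the $\sfd^{2}$-cyclically monotone set $\Gamma$.

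Setting $\tilde\mu_{1}^{k}:=(T_{k})_{\sharp}\tilde\mu_{0}$ and applying Theorem \ref{thm:MCPmu1}, I obtain $\nu^{k}\in\Opt(\tilde\mu_{0},\tilde\mu_{1}^{k})$ with $(\ee_{t})_{\sharp}\nu^{k}=\rho^{k}_{t}\mm$ absolutely continuous for $t\in[0,1)$ and satisfying the entropy bound \eqref{E:weakMCP}. Exactly as in Step 2 of Proposition \ref{prop:findelta}, Jensen's inequality applied to \eqref{E:weakMCP} (and to the constant density $\rho_{0}=\mathbf{1}_{A}/\mm(A)$, so $\E_{N}(\tilde\mu_{0})=\mm(A)^{1/N}$) gives $\liminf_{t\to 0}\mm(\{\rho^{k}_{t}>0\})\geq \mm(A)$, while both sets are contained in the $tD$-neighbourhood of $A$, whose measure tends to $\mm(A)$ as $t\to 0$. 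By a simple inclusion-exclusion estimate, there exists a small $\tau\in(0,1)$ such that
$$
\mm\bigl(\{\rho^{1}_{\tau}>0\}\cap\{\rho^{2}_{\tau}>0\}\bigr)>0.
$$

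With the overlap in place, I carry out the gluing procedure of Step 3 of Proposition \ref{prop:findelta} at time $\tau$. The averaged plan $(\nu^{1}+\nu^{2})/2$ is optimal from $\tilde\mu_{0}$ to $(\tilde\mu_{1}^{1}+\tilde\mu_{1}^{2})/2$ as its support is $\sfd^{2}$-cyclically monotone, and the mixed plan $\nu^{\text{mix}}$ obtained by disintegrating along $\ee_{\tau}$ and gluing left halves to right halves has the same marginals and the same total cost by the same triangle-inequality argument, hence is also optimal. On the positive-measure overlap set, however, the conditional right halves $\nu^{\text{right}}_{x}$ are non-trivial mixtures of geodesics ending in $A_{i}$ with geodesics ending in $A_{j}$; since $\dist(A_{i},A_{j})>0$, the plan $\nu^{\text{mix}}$ is not concentrated on non-branching geodesics. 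A final restriction to $[0,1-\varepsilon]$ (chosen so the endpoint marginal becomes absolutely continuous, using Theorem \ref{thm:MCPmu1} together with the fact that the separation of the supports persists for small $\varepsilon$) produces the contradiction with the essentially non-branching assumption. The main delicate step is the partition/selection reduction: this is what replaces the finite-Dirac structure of Proposition \ref{prop:findelta} and ensures that the two good-geodesic transport problems supplied by Theorem \ref{thm:MCPmu1} have mutually singular, well-separated targets so that the mixing procedure produces genuine branching.
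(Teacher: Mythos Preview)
Your proposal is correct and follows essentially the same route as the paper's proof: both argue by contradiction via Kantorovich potentials and the set $S$, reduce to a bounded first marginal with constant density $\mm\llcorner_{A}/\mm(A)$ and two $\sfd^{2}$-monotone maps $T_{1},T_{2}$ with well-separated images, invoke Theorem~\ref{thm:MCPmu1} to get absolutely continuous good geodesics $\nu^{1},\nu^{2}$, use the entropy bound plus the $\varepsilon$-neighbourhood argument to force $\mm(\{\rho^{1}_{\tau}>0\}\cap\{\rho^{2}_{\tau}>0\})>0$, and then run the Rajala--Sturm mixing at time $\tau$ (restricting at the end to make the second marginal absolutely continuous). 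The only noticeable difference is cosmetic: to separate the images of $T_{1}$ and $T_{2}$ the paper passes through Lusin's theorem (making $T_{1},T_{2}$ continuous on a compact $S_{1}$, hence attaining a positive minimum distance $2r$, then localising into disjoint balls $B_{r}(z_{1}),B_{r}(z_{2})$), whereas you use a countable Borel cover by sets of small diameter and pigeonhole; both reductions are standard and yield the same outcome.
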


\begin{proof}
The proof is along the same lines of the proof of Proposition \ref{prop:findelta} but with some (non-completely trivial) modifications. 
%We start assuming that $\supp(\mu_{0}), \supp(\mu_{1})$ are both bounded and the density of $\mu_{0}$ with respect to $\mm$, say $\rho_{0}$, is bounded from above and from below: 
%$1/C \leq \rho_{0} \leq C$ for some $C>0$, $\mu_{0}$-a.e.. In particular the $N$-Entropy of $\mu_{0}$ is finite.\footnote{AM: vedere come togliere questa ipotesi}
\medskip

{\bf Step 1.}\\
Consider a couple of Kantorovich potentials $\f,\f^{c}$ associated with the transport problem, the sets 
\begin{equation} \label{eq:defGammaGammaxThm}
\Gamma = \left\{ (x,y) \in X \times X \colon \f(x) + \f^{c}(y) = \frac{\sfd^{2}(x,y)}{2} \right\}, \qquad \Gamma(x) : =P_{2} \Big(\Gamma \cap ( \{x\}   \times X ) \Big), 
\end{equation}
and $S$ the set of those $x \in X$ such that $\Gamma(x)$ is not a singleton. Note that the set $S$ is analytic. 
It will be enough to prove the stronger statement $\mu_{0}(S) = 0$. 

So suppose by contradiction $\mu_{0}(S) > 0$. By Von Neumann Selection Theorem, there exists
$$
T_{1},T_{2} : S \to X, \qquad \text{graph}(T_{1}), \ \text{graph}(T_{2}) \subset \Gamma,
$$
both $\mu_{0}$-measurable and $\sfd(T_{1}(x), T_{2}(x)) > 0$, for all $x \in S$. 
By Lusin Theorem, there exists a compact set $S_{1} \subset S$ such that the maps $T_{1}$ and $T_{2}$ are both continuous when restricted to $S_{1}$ 
and $\mu_{0}(S_{1}) > 0$. In particular 
$$
\inf_{x \in S_{1}} \sfd(T_{1}(x), T_{2}(x)) = \min_{x\in S_{1}} \sfd(T_{1}(x), T_{2}(x)) = 2r >0.
$$
Then one can deduce the existence of a compact set $S_{2} \subset S_{1}$, again with $\mu_{0}(S_{2})>0$ such that
$$
\{ T_{1}(x) \colon x\in S_{2}  \}\subset B_{r}(z_{1}),\qquad    \{ T_{2}(x)  \colon x\in S_{2} \} \subset B_{r}(z_{2}), 
$$
with $\sfd(z_{1},z_{2}) > 2r$, where $B_{r}(z_{i})$ is the open ball centred in $z_{i}$ and radius $r$, for $i = 1,2$. 
%As a last reduction we can also assume that 
%%
%$$
%\mm(S_{2}\setminus \{ \rho_{0} > 0\}) = 0,
%$$
%%
%where $\mu_{0} = \rho_{0}\mm$.
\medskip

{\bf Step 2.} \\
With no loss of generality we can assume $\mu_{0}$ to be restricted and renormalised to $S_{2}$. 
In particular we redefine $\mu_{0} : = \mm\llcorner_{S_{2}}/\mm(S_{2})$;  the following measures
 are well defined as well %
$$
\eta_{1} : = (T_{1})_{\sharp} \mu_{0}, \qquad  \eta_{2} : = (T_{2})_ {\sharp} \mu_{0};
$$
in particular $\eta_{1},\eta_{2} $ are Borel probability measures with
$$ 
\eta_{1} \perp \eta_{2};
$$
notice moreover that we can also assume the supports of $\mu_{0}, \eta_{1}$ and $\eta_{2}$ to be bounded.
By Theorem \ref{thm:MCPmu1} we know there exist $\nu^{1} \in \Opt(\mu_{0},\eta_{1})$ and $\nu^{2} \in \Opt(\mu_{0},\eta_{2})$ verifying 
\eqref{E:upperbound} and \eqref{E:weakMCP}; note  that necessarily 
$$
\nu^{1} \perp \nu^{2};
$$
indeed for $i = 1,2$ it holds $\nu^{i}(\{\gamma \colon \gamma_{1} \in B_{r}(z_{i}) \}) = 1$ and by construction $B_{r}(z_{1}) \cap B_{r}(z_{2}) = \emptyset$. 
\\In particular reasoning as in the proof of Proposition \ref{prop:findelta} (in particular see the proof of   \eqref{eq: mmrhot>01}), from \eqref{E:weakMCP} and Jensen's inequality 
it follows that 
\begin{equation} \label{eq: mmrhot>0}
\liminf_{t\to 0} \mm \left(\{ \rho_{t}^{i} > 0 \} \right) \geq\mm \left(S_{2}\right) = \mm \left(\{ \rho_{0}^{i} > 0 \} \right). 
\end{equation}
where $(\ee_{t})_{\sharp} \nu^{i} = \rho_{t}^{i} \, \mm$.

Denote now by $S_{2}^{\varepsilon}:=\{x \in X\,:\, \sfd(x,y)\leq \varepsilon \text{ for some } y \in S_{2}\}$ the $\varepsilon$-tubular neighbourhood of $S_{2}$ and observe  that,  by Dominated Convergence Theorem, we have $\lim_{\varepsilon\to 0} \mm(S_{2}^{\varepsilon})=\mm(S_{2})$. In particular there exists $\varepsilon_{0}>0$ such that 
\begin{equation}\label{eq:3/2}
\mm(S_{2}^{\varepsilon_{0}}) \leq \frac{3}{2} \mm(S_{2}).
\end{equation}
We now claim that there  exists a small positive time $\tau > 0$, such that 
\begin{equation}\label{eq:tCharTau}
\mm \left( \{ \rho_{\tau}^{1} > 0 \} \cap \{ \rho_{\tau}^{2} > 0 \} \right) > 0.
\end{equation}
To this aim notice that, by construction, for  $\mu_{t}^{i}$-a.e. $x \in X$ there exists a geodesic $\gamma \in \Geo (X)$ such that $x=\gamma_{t}$ and $\gamma_{0}\in S_{2}$; in particular, for $t \in [0, \varepsilon_{0}]$ the measure $\mu_{t}^{i}$ is concentrated on $S_{2}^{\varepsilon_{0}}$.
But then the combination of  \eqref{eq: mmrhot>0} and  \eqref{eq:3/2} implies that there exists $\tau\in (0, \varepsilon_{0})$ satisfying the claim \eqref{eq:tCharTau}.

\medskip

{\bf Step 3.}\\ 
Note  that  $(\ee_{0},\ee_{1})_{\sharp}(\nu^{1} + \nu^{2})/2$ is an optimal transference plan. Indeed 
$$
(\ee_{0},\ee_{1})_{\sharp}(\nu^{1} + \nu^{2})/2 \in \Pi(\mu_{0}, (\eta_{1}+\eta_{2})/2)
$$
and since the graph of both $T_{1}$ and $T_{2}$ are subsets of $\Gamma$,  necessarily the transference plan $[(Id,T_{1})_{\sharp} \mu_{0}  +  (Id,T_{2})_{\sharp} \mu_{0} ]/2$ is optimal;
it follows that  
$$
W_{2}(\mu_{0}, (\eta_{1}+\eta_{2})/2)^{2} = \frac{1}{2}\sum_{i = 1,2}\int \sfd^{2}(x,y) (Id,T_{i})_{\sharp} \mu_{0} 
= \frac{1}{2}\sum_{i = 1,2}\int \sfd^{2}(x,y) (\ee_{0},\ee_{1})_{\sharp} \nu^{i}. 
$$
The same argument also ensures  that there exists a set $\bar \Gamma \subset \Geo(X)$ such that $\{ (\gamma_{0},\gamma_{1}) \colon \gamma \in \bar \Gamma \}$ is $\sfd^{2}$-cyclically monotone (since it is contained in $\Gamma$ which is $\sfd^{2}$-cyclically monotone) and 
$\nu^{1}(\bar \Gamma) = \nu^{2}(\bar \Gamma) = 1$.

To conclude now it is enough to run the mixing procedure performed in \cite[Corollary 1.4]{RS2014} and already used in the final step of the proof of Proposition \ref{prop:findelta}. 
This will produce a branching dynamical transference plan between  absolutely continuous measures yielding a contradiction with the essentially non-branching assumption.
\end{proof}

\begin{theorem}\label{T:2}
Let $(X,\sfd,\mm)$ be an essentially non-branching metric measure space verifying  $\MCP(K,N)$.
Then for any $\mu_{0},\mu_{1} \in \mathcal{P}_{2}(X)$ with $\mu_{0} \ll \mm$, there exists a unique $\nu\in \Opt(\mu_{0}, \mu_{1})$ and such $\nu$ is induced by a map.
\end{theorem}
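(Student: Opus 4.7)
The plan is to leverage Theorem~\ref{T:1}---whose only hypothesis on the source marginal is absolute continuity---at every rational intermediate time, and then rigidify using continuity of geodesics.

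Existence of some $\nu\in\Opt(\mu_{0},\mu_{1})$ is standard (take an optimal transference plan, which exists by properness, and measurably select a geodesic between each pair of endpoints). For uniqueness, suppose $\nu_{1},\nu_{2}\in\Opt(\mu_{0},\mu_{1})$ and form $\bar\nu:=(\nu_{1}+\nu_{2})/2$; a short triangle-inequality argument (as in the restriction discussion in Section~\ref{S:prel}) shows $\bar\nu\in\Opt(\mu_{0},\mu_{1})$. For each $t\in(0,1]\cap\Q$ the restriction $(\text{restr}_{0}^{t})_{\sharp}\bar\nu$ lies in $\Opt(\mu_{0},\bar\mu_{t})$ with $\bar\mu_{t}:=(\ee_{t})_{\sharp}\bar\nu$. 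Since $\mu_{0}\ll\mm$, Theorem~\ref{T:1} applied to the couple $(\mu_{0},\bar\mu_{t})$ produces a Borel map $S_{t}\colon X\to X$ (defined $\mu_{0}$-a.e.) with
\[
(\ee_{0},\ee_{t})_{\sharp}\bar\nu=(\id,S_{t})_{\sharp}\mu_{0}.
\]

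The inequality $\nu_{i}\leq 2\bar\nu$ implies that $(\ee_{0},\ee_{t})_{\sharp}\nu_{i}$ is concentrated on $\gra(S_{t})$; having first marginal $\mu_{0}$, it must coincide with $(\id,S_{t})_{\sharp}\mu_{0}$. Equivalently, for $\nu_{i}$-a.e.\ geodesic $\gamma$ one has $\gamma_{t}=S_{t}(\gamma_{0})$ for every $t\in(0,1]\cap\Q$. Intersecting the countably many $\mu_{0}$-full measure sets on which each $S_{t}$ is defined produces a single $\mu_{0}$-full measure set $X^{*}$ where this holds for every rational $t$ simultaneously. Two continuous curves agreeing on the dense set $\Q\cap(0,1]$ coincide, so for each $x\in X^{*}$ there is a unique geodesic $\gamma^{x}$ with $\gamma^{x}_{0}=x$ and $\gamma^{x}_{t}=S_{t}(x)$ at every rational $t$; existence follows from $(\ee_{0})_{\sharp}\nu_{i}=\mu_{0}$. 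Setting $G(x):=\gamma^{x}$ defines a Borel map $G\colon X\to\Geo(X)$ (Borel measurability of $G(\cdot)(t)$ is inherited from that of $S_{t}$ at rational $t$ and from continuous extension elsewhere), and $\nu_{1}=G_{\sharp}\mu_{0}=\nu_{2}$ delivers uniqueness and the map representation in one stroke.

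The conceptual core, more than any technical obstacle, is the observation that Theorem~\ref{T:1} requires only absolute continuity of the \emph{source} marginal, which allows it to be invoked at $(\mu_{0},\bar\mu_{t})$ for every $t$ with no knowledge of the regularity of $\bar\mu_{t}$. The averaging $\bar\nu=(\nu_{1}+\nu_{2})/2$ is what lets us compare $\nu_{1}$ and $\nu_{2}$ against a \emph{common} map $S_{t}$ at each time; maps extracted separately from $\nu_{1}$ and $\nu_{2}$ could a priori differ. The remaining bookkeeping---countability of $\Q$, continuous extension of the rational-time data to a full geodesic, and Borel measurability of $G$---is routine.
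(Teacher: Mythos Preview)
Your argument is correct. Both proofs hinge on the same idea---apply Theorem~\ref{T:1} at rational intermediate times, exploiting that only the \emph{first} marginal need be absolutely continuous---but the logical structure differs. The paper argues by contradiction: assuming some $\nu$ is not induced by a map, it disintegrates $\nu$ with respect to $\ee_{0}$, uses a selection argument to reduce each non-Dirac fiber to a sum of two Diracs, and then pigeonholes over $\Q\cap(0,1)$ to locate a single rational time $\bar q$ at which $(\ee_{0},\ee_{\bar q})_{\sharp}$ of a suitably restricted plan fails to be induced by a map, contradicting Theorem~\ref{T:1}; uniqueness of $\nu$ then follows from the standard averaging observation. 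Your route is direct: you begin from the average $\bar\nu=(\nu_{1}+\nu_{2})/2$, invoke Theorem~\ref{T:1} at \emph{every} rational $t$ simultaneously to produce maps $S_{t}$, use $\nu_{i}\le 2\bar\nu$ to force $(\ee_{0},\ee_{t})_{\sharp}\nu_{i}$ onto $\gra(S_{t})$, and pin down the full geodesic by continuity. This sidesteps the disintegration-and-selection step and constructs the inducing map $G$ explicitly, delivering the map representation and uniqueness in one stroke. The paper's approach, though slightly longer, makes the failure mechanism more visible; yours is more economical.
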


\begin{proof}
As usual, it is sufficient to show that every $\nu\in \Opt(\mu_{0}, \mu_{1})$ is induced by a map; indeed if  there exist $\nu_{1}\neq \nu_{2} \in \Opt(\mu_{0},\mu_{1})$   then also $\frac{1}{2}(\nu_{1}+\nu_{2})$ would be an element of  $\Opt(\mu_{0},\mu_{1})$ but  $\frac{1}{2}(\nu_{1}+\nu_{2})$ cannot be given by a map.

Assume by contradiction there exists $\nu\in \Opt(\mu_{0}, \mu_{1})$ not induced by a map. In particular, given the disintegration of $\nu$ with respect to $\ee_{0}$
$$
\nu = \int_{X} \nu_{x} \, \mu_{0}(dx), 
$$
there exists a compact subset $D\subset \supp (\mu_{0})$ with $\mu_{0}(D)>0$ such that for $\mu_{0}$-a.e. $x \in D$ the probability measure $\nu_{x}$ is not a Dirac mass. 
Via a selection argument, for $\mu_{0}$-a.e. $x \in D$ we can also assume that $\nu_{x}$ is the sum of two Dirac masses.
Then for $\mu_{0}$-a.e. $x \in D$ there exist $t=t(x)\in (0,1)$ such that $(\ee_{t})_{\sharp} \nu_{x}$ is not a Dirac mass over $X$.
Then by continuity there exists an  open interval $I=I(x)\subset (0,1)$ containing $t(x)$ above such that  $(\ee_{s})_{\sharp} \nu_{x}$ is still not a Dirac mass over $X$, for every $s \in I(x)$.
\\It follows that we can find a subset  $\bar{D} \subset D \subset X$ still satisfying $\mu_{0}(\bar{D})>0$ with the following property: there exists $\bar{q}\in \Q\cap (0,1)$ such that $(\ee_{\bar q})_{\sharp} \nu_{x}$ is not a Dirac mass, for every  $x \in \bar{D}$.

 Indeed, since  $D=  \bigcup_{q \in 	\Q \cap (0,1)} D_{q}$ where 
$$D_{q}:=\left\{ x \in D \,: \, (\ee_{q})_{\sharp} \nu_{x} \text{ is not a Dirac mass} \right\} $$
and since $\mu_{0}(D)>0$, there must exist  $\bar{q}\in 	\Q \cap (0,1)$ with $\mu_{0}(D_{\bar{q}})>0$; we then set $\bar{D}:=D_{\bar{q}}$.
Set now
$$
\bar{\nu} = \frac{1}{\mu_{0}(\bar{D})} \int_{\bar{D}} \nu_{x} \, \mu_{0}(dx). 
$$
Note that $\bar{\nu}$ is an optimal dynamical plan; in particular $(\ee_{0}, \ee_{\bar{q}})_{\sharp} \bar{\nu}$ is an optimal plan which is not given by a map. 
This contradicts Theorem \ref{T:1} and thus the proof is complete.
\end{proof}

We can now collect Theorem \ref{thm:MCPmu1}, Theorem \ref{T:1} and Theorem \ref{T:2} in order to prove  Theorem \ref{T:final}.
 \bigskip 
 \\
 
 \textbf{Proof of Theorem \ref{T:final}}.
 \\ Having Theorem \ref{thm:MCPmu1}, Theorem \ref{T:1} and Theorem \ref{T:2} at disposal, the only non trivial statements to show is that if $\mu_{0},\mu_{1}\in \P_{2}(X)$ with  $\mu_{0}=\rho_{0} \mm \ll \mm$ then  $(\ee_{t})_{\sharp} \nu \ll \mm$ for any $t \in [0,1)$, and that  \eqref{E:MCPFinal} holds.\\

\textbf{Step 1}:  $(\ee_{t})_{\sharp} \nu \ll \mm$ for any $t \in [0,1)$.
\\ To this aim first of all observe that since we know that the transport is given by a  $W_{2}$-optimal  map $T:X \to X$, then there exist partitions $\supp(\mu_{0})=\mathring \bigcup_{j\in \N} E^{0}_{j}, \; \supp(\mu_{1})=\mathring \bigcup_{j\in \N} E^{1}_{j}$ such that $\mu_{0}(E^{0}_{j})>0$  for all $j\in \N$,  each $E^{0}_{j}, E^{1}_{j}$ is bounded, $\|\rho_{0}\|_{L^{\infty}(E^{0}_{j},\mm)}<\infty$ and  for every $j\in \N$ there exists $i \in \N$ such that $T(E^{0}_{j})\subset E^{1}_{i}$.
\\Call $\mu_{0}^{j}:=\mu_{0}(E^{0}_{j})^{-1} \mu_{0}\llcorner E^{0}_{j}$, $\Gamma^{j}:=\{ \gamma \in \Geo(X) \,: \,  \gamma_{0} \in E^{0}_{j}\}$ and let $\nu^{j} := \mu_{0}(E^{0}_{j})^{-1} \nu \llcorner \Gamma^{j}$.
Notice that  $\nu^{j} \in \Opt(\mu_{0}^{j}, T_{\sharp} \mu_{0}^{j})$ and, since by construction $\mu_{0}^{j}$ has bounded support and bounded density  and $T_{\sharp} \mu_{0}^{j}$ has bounded support, then  we can apply  Theorem \ref{T:1} and infer that  $(\ee_{t})_{\sharp} \nu^{j}= \rho^{j}_{t} \mm \ll \mm$ for all $t \in [0,1)$ and $j \in \N$. 
\\Therefore it is enough to show that 
\begin{equation}\label{eq:rhoirhoj>0}
\mm(\{\rho^{j}_{t}>0\} \cap  \{\rho^{i}_{t}>0\} )=0  \quad \forall t \in (0,1),\; \forall i\neq j.
\end{equation}
If by contradiction there exists $\tau\in (0,1)$ such that $\mm(\{\rho^{j}_{\tau}>0\} \cap  \{\rho^{i}_{\tau}>0\} )>0$ for some $i \neq j$ then we could  run the mixing procedure performed in \cite[Corollary 1.4]{RS2014} and already used in the final step of the proof of Proposition \ref{prop:findelta}. 
This will produce a branching dynamical transference plan between  absolutely continuous measures yielding a contradiction with the essentially non-branching assumption. More precisely, following verbatim {Step 3} in the proof  of Proposition \ref{prop:findelta}, we get an optimal dynamical plan $\nu^{mix}$ such that for $\mm$-a.e. $x\in (\{\rho^{j}_{\tau}>0\} \cap  \{\rho^{i}_{\tau}>0\}$ the measure $\nu_{x}^{left}$ is not a Dirac mass; therefore  the time-reversed optimal plan $\nu^{mix,-}$ defined by
$$\nu^{mix,-}(\gamma):=\nu(\gamma^{-}), \quad  \text{ where } \gamma^{-}_{t}:=\gamma_{1-t} \quad \forall t \in [0,1],$$
is not concentrated on a set of non-branching geodesics. Since by construction $(\ee_{t})_{\sharp}(\nu^{mix,-})\ll \mm$ for every $t \in  (0,1]$, the optimal dynamical plan $\bar{\nu}:=\textrm{restr}_{\frac{1-\tau}{2}}^{1} \nu^{mix,-}$   contradicts the essential non-branching assumption.
\\

\textbf{Step 2}: the $\MCP(K,N)$ inequality  \eqref{E:MCPFinal} holds in case $\mu_{1}$ is a finite convex combination of Dirac masses.
\\We first consider the case $\mu_{1}=\sum_{j=1}^{n} \lambda_{j} \delta_{x_{j}}$ with $\lambda_{j}\in (0,1]$ for every $j=1,\ldots,n$, $\sum_{j=1}^{n} \lambda_{j}=1$, and $x_{i}\neq x_{j}$ for $i\neq j$. 
Let $\nu \in \Opt(\mu_{0}, \mu_{1})$ and  consider the disintegration $\nu=\sum_{j=1}^{n} \nu_{j}$ with respect to $\ee_{1}$; observe that by definition $(\ee_{1})_{\sharp} \nu_{j}= \lambda_{j} \delta_{x_{j}}$. Since $\frac{1}{\lambda_{j}} \nu_{j} \in \Opt(\frac{1}{\lambda_{j}} (\ee_{0})_{\sharp} \nu_{j} , \delta_{x_{j}})$, and since by the first part of the theorem  the optimal dynamical plan is unique, then $\nu_{j}$ satisfies the  $\MCP(K,N)$ inequality
\begin{equation}\label{E:MCPj}
\int \rho_{j, t}^{1-1/N'} \, \mm \geq \int \tau_{K,N'}^{(1-t)} (\sfd(x,x_{j})) \rho_{j,0}^{1-1/N'} \, \mm (d\gamma), \quad \forall t \in [0,1), 
 \end{equation}
where we have written $(\ee_{t})_{\sharp}\nu_{j}= \rho_{j,t} \mm$.
\\Using  that the transport from $\mu_{0}$ to $\mu_{1}=\sum_{j=1}^{n} \lambda_{j} \delta_{x_{j}}$ is given by a map, since by construction $x_{i}\neq x_{j}$ for $i\neq j$,  it follows  that the measures $(\ee_{0})_{\sharp} (\nu_{j})$ are concentrated on pairwise disjoint sets. In particular it holds
\begin{eqnarray}
\sum_{j=1}^{n} \int \tau_{K,N'}^{(1-t)} (\sfd(x,x_{j})) \rho_{j,0}^{1-1/N'} \, \mm (d\gamma) &=& \sum_{j=1}^{n} \int \tau_{K,N'}^{(1-t)} (\sfd(\gamma_{0},\gamma_{1})) \rho_{j,0}^{-1/N'} \, \nu_{j} (d\gamma) \nonumber \\
&=&   \int \tau_{K,N'}^{(1-t)} (\sfd(\gamma_{0},\gamma_{1})) \, \Big(  \sum_{j=1}^{n}  \rho_{j,0}^{-1/N'} \nu_{j} \Big) (d\gamma) \nonumber \\
&=&   \int \tau_{K,N'}^{(1-t)} (\sfd(\gamma_{0},\gamma_{1}))  \,  \rho_{0}^{-1/N'} \nu  (d\gamma), \label{eq:nujnu}
\end{eqnarray}
where, as usual,  we  write $(\ee_{t})_{\sharp}\nu= \rho_{t} \mm$, for $t \in [0,1)$.  

On the other hand, we also have
\begin{equation}\label{eq: rhotjrhot3}
\mm(\{ \rho_{t,j}>0\} \cap \{ \rho_{t,i}>0\})=0 \, \quad \forall t \in [0,1], \; \forall i\neq j.
\end{equation}
Indeed  otherwise there exists $\tau\in (0,1)$ (notice that $\tau$ cannot be $0$ by the above argument or $1$ since by construction $\mu_{1}$ is finite sum of Dirac masses) such that $\mm(\{ \rho_{\tau,j}>0\} \cap \{ \rho_{\tau,i}>0\})>0$, and we could repeat verbatim  verbatim {Step 3} in the proof  of Proposition \ref{prop:findelta} arriving to contradict the essential non branching assumption.

Having \eqref{eq: rhotjrhot3} at disposal, we can argue as in \eqref{eq:nujnu} and get that
\begin{equation}\label{eq:rhojtN}
\sum_{j=1}^{n} \int \rho_{j, t}^{1-1/N'} \, \mm=   \int \Big( \sum_{j=1}^{n }\rho_{j, t}^{1-1/N'} \Big) \, \mm = \int \rho_{t}^{1-1/N'} \, \mm, \quad \forall t \in [0,1], \; \forall N'\geq N.
\end{equation}
The combination of \eqref{E:MCPj}, \eqref{eq:nujnu} and \eqref{eq:rhojtN} implies that
$$
\int \rho_{t}^{1-1/N'} =  \sum_{j=1}^{n} \int \rho_{j, t}^{1-1/N'} \, \mm \geq  \sum_{j=1}^{n} \int \tau_{K,N'}^{(1-t)} (\sfd(x,x_{j})) \rho_{j,0}^{1-1/N'} \, \mm (d\gamma) =  \int \tau_{K,N'}^{(1-t)} (\sfd(\gamma_{0},\gamma_{1}))  \,  \rho_{0}^{-1/N'} \nu  (d\gamma),
$$
as desired.
\\

\textbf{Step 3}: the $\MCP(K,N)$ inequality  \eqref{E:MCPFinal} holds for a general  $\mu_{1}\in \P_{2}(X)$.
\\Since $(X,\sfd)$ is separable, it is a classical construction to approximate $\mu_{1} \in \P_{2}(X)$ by a convex combination of Dirac masses; i.e. there exist a sequence $\{x_{j}\}_{j \in \N}\subset \supp( \mu_{1})$ and a sequence  $\{\{\lambda_{n,j}\}_{j=1}^{n}\}_{j \in \N}\subset (0,1)$ with $\sum_{j \leq n} \lambda_{n,j}=1$ such that
\begin{equation}\label{mu1ntomu1}
\sum_{j=1}^{n} \lambda_{n,j} \, \delta_{x_{j}}=: \mu_{1}^{n}\rightharpoonup  \mu_{1} \quad \text{in $W_{2}$.}
\end{equation}
By step 2 we know that for every $n\in \N$ there exists $\nu^{n}\in \Opt(\mu_{0}, \mu^{n}_{1})$ such that 
\begin{equation}\label{E:MCPn}
\int (\rho_{t}^{n})^{1-1/N'} \, \mm \geq \int \tau_{K,N'}^{(1-t)} (\sfd(\gamma_{0},\gamma_{1})) \rho_{0}^{-1/N'} \, \nu^{n} (d\gamma), \quad \forall t \in [0,1), \; \forall N'\geq N,
 \end{equation}
where we have written $(\ee_{t})_{\sharp}\nu^{n} = \rho_{t}^{n} \mm$.  
Now, by \cite[Proposition 4.8]{GMSProc}, there exists a  limit $\nu \in \Opt(\mu_{0},\mu_{1})$ such that $(\ee_{t})_{\sharp}\nu^{n} \rightharpoonup (\ee_{t})_{\sharp} \nu$ in $W_{2}$, for every $t \in [0,1]\cap \Q$. Therefore, since the Renyi entropy is upper semicontinuous with respect to $W_{2}$ convergence (see for instance \cite[Lemma 1.1]{sturm:II}), we infer that  \eqref{E:MCPFinal} holds for every $t \in [0,1]\cap \Q$.  Using again the upper semicontinuity of the Renyi entropy for the left hand side, and  the continuity in $t$ of the right hand side we conclude that  \eqref{E:MCPFinal} holds for every $t \in [0,1]$.
\\ \hfill$\Box$

Combining our main result Theorem \ref{T:final} with the work of Rajala \cite{R2012}, \cite{R2013} for $\CD(K,N)/\CD^{*}(K,N)$ spaces, we get the next corollary.

\begin{corollary}\label{Cor:CD}
Let $(X,\sfd,\mm)$ be an essentially non-branching metric measure space verifying  $\CD(K,N)$ (resp. $\CD^{*}(K,N)$) for some $K \in \R, N\in (1,\infty)$. If $\mu_{0}, \mu_{1} \in \P_{2}(X)$ with $\mu_{0}=\rho_{0} \mm \ll \mm$,  then there exists a unique $\nu \in \Opt(\mu_{0},\mu_{1})$; such a unique   $\nu \in \Opt(\mu_{0},\mu_{1})$ is given by a map and  it satisfies  $(\ee_{t})_{\sharp} \nu \ll \mm$ for any $t \in [0,1)$. 
\\ Moreover if $\mu_{0}=\rho_{0}\mm, \mu_{1}=\rho_{1}\mm \in \P_{ac}(X)$  have bounded support,  then $\nu$ satisfies the $\CD(K,N)$-convexity (respectively $\CD^{*}(K,N)$-convexity) condition and if in addition the densities $\rho_{0}, \rho_{1}$ are $\mm$-essentially bounded then, writing $(\ee_{t})_{\sharp} \nu=\rho_{t} \mm$, it holds 
\begin{align}
\| \rho_{t}\|_{L^{\infty}(X,\mm)} &\leq  e^{D\sqrt{(N-1)K^{-}}} \max\{ \| \rho_{0} \|_{L^{\infty}(X,\mm)},   \| \rho_{1} \|_{L^{\infty}(X,\mm)} \}, \quad 	\forall t \in [0,1], \text{  if $\CD(K,N)$ holds} \label{eq:boundCD}, \\
\| \rho_{t}\|_{L^{\infty}(X,\mm)} &\leq  e^{D\sqrt{N \,K^{-}}} \max\{ \| \rho_{0} \|_{L^{\infty}(X,\mm)},   \| \rho_{1} \|_{L^{\infty}(X,\mm)} \}, \quad 	\forall t \in [0,1], \text{  if $\CD^{*}(K,N)$ holds,} \label{eq:boundCD*}
\end{align}
where $D = \diam(\supp (\mu_{0}) \cup \supp (\mu_{1}))$ and $K^{-} = \max\{-K,0\}$.
 \end{corollary}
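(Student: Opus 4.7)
The plan is to combine Theorem~\ref{T:final} (which supplies uniqueness and the map structure under $\MCP(K,N)$) with Rajala's construction of ``good geodesics'' in \cite{R2012,R2013} (which supplies the $\CD/\CD^{*}$-convexity and the $L^{\infty}$ bounds). The bridge between the two ingredients is the well-known implication that both $\CD(K,N)$ and $\CD^{*}(K,N)$ force $\MCP(K,N)$, so that an essentially non-branching space satisfying either curvature condition automatically falls within the scope of Theorem~\ref{T:final}.

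First I would apply Theorem~\ref{T:final} directly: since $(X,\sfd,\mm)$ is essentially non-branching and $\MCP(K,N)$ holds, for $\mu_{0}=\rho_{0}\mm\ll\mm$ and any $\mu_{1}\in\P_{2}(X)$ there exists a unique $\nu\in\Opt(\mu_{0},\mu_{1})$, $\nu$ is induced by a map, and $(\ee_{t})_{\sharp}\nu\ll\mm$ for every $t\in[0,1)$. This already settles the unconditional part of the statement.

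Next, under the additional assumption that $\mu_{0}=\rho_{0}\mm$ and $\mu_{1}=\rho_{1}\mm$ are both absolutely continuous with bounded support (and with essentially bounded densities for the $L^{\infty}$ estimates), I would invoke Rajala's existence results: \cite{R2012} (respectively \cite{R2013}) produces an optimal dynamical plan $\tilde\nu\in\Opt(\mu_{0},\mu_{1})$ along which the R\'enyi entropy $\E_{N}$ satisfies the full $\CD(K,N)$-convexity (resp.\ $\CD^{*}(K,N)$-convexity) inequality, and whose densities obey precisely the bounds \eqref{eq:boundCD} (resp.\ \eqref{eq:boundCD*}). Crucially, Rajala's construction does \emph{not} require essential non-branching. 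Applying now the uniqueness clause of Theorem~\ref{T:final} we must have $\tilde\nu=\nu$, so the $\CD$ (resp.\ $\CD^{*}$) convexity inequality and the $L^{\infty}$ bounds transfer automatically to the unique optimal plan $\nu$ produced in the first step.

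The argument is therefore a clean merger of an existence statement (Rajala's good geodesics) with a uniqueness statement (Theorem~\ref{T:final}); the only point to verify is the simultaneous compatibility of the hypotheses of both results, which is immediate from the assumptions of the corollary. Accordingly, there is no serious obstacle, the real work having already been carried out in the proof of Theorem~\ref{T:final}.
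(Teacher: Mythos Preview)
Your proposal is correct and follows essentially the same approach as the paper: first reduce to $\MCP(K,N)$ and apply Theorem~\ref{T:final} for uniqueness, the map structure, and absolute continuity; then use uniqueness to identify the unique optimal dynamical plan with Rajala's good geodesic from \cite{R2012,R2013}, thereby inheriting the $\CD/\CD^{*}$-convexity and the $L^{\infty}$ bounds. The only cosmetic difference is that the paper obtains the $\CD/\CD^{*}$-convexity directly from the definition (the defining inequality holds for \emph{some} optimal plan, hence for the unique one) and invokes Rajala only for the density bounds, whereas you pull both from Rajala; the logic is the same.
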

 
\begin{proof}
 Since $\CD(K,N)$ implies  $\MCP(K,N)$ and $\CD^{*}(K,N)$ imply $\CD(K^{*},N)$ for $K=K \frac{N-1}{N}$ (see \cite[Proposition 2.5]{BS10}), by Theorem \ref{T:final}  there exists a unique $\nu \in \Opt(\mu_{0},\mu_{1})$ which moreover  is given by a map and  satisfies  $(\ee_{t})_{\sharp} \nu \ll \mm$ for any $t \in [0,1)$. By uniqueness of the optimal dynamical plan,  if $\mu_{0}=\rho_{0}\mm, \mu_{1}=\rho_{1}\mm \in \P_{ac}(X)$  have bounded support, then it is obvious that $\nu$ satisfies the $\CD(K,N)$-convexity (respectively $\CD^{*}(K,N)$-convexity)  condition. Finally the $L^{\infty}$-bounds on the density $\rho_{t}$ of $(\ee_{t})_{\sharp} \nu$  follow from \cite[Theorem 1.3]{R2012} for the $\CD(K,N)$ case, and from  \cite[Theorem 1.2]{R2013}  for the  $\CD^{*}(K,N)$ case.
 \end{proof}

We can also obtain existence and uniqueness of optimal maps under local curvature conditions.
\begin{corollary}\label{C:CDloc}
Let $(X,\sfd, \mm)$ be an essentially non-branching, proper, geodesic, metric measure space satisfying $\CD_{loc}(K,N)$. 
Then for any $\mu_{0}, \mu_{1} \in \P_{2}(X)$ with $\mu_{0} \ll \mm$ there exists a unique optimal transport map. 
Moreover there exists a unique optimal dynamical plan $\nu \in \Opt(\mu_{0},\mu_{1})$ and $\mu_{t} =\rho_{t} \mm$ for any $t \in [0,1)$.
\end{corollary}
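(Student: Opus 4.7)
The plan is to reduce the local curvature hypothesis $\CD_{loc}(K,N)$ to the global $\MCP(K,N)$ condition required by Theorem \ref{T:final}, and then simply apply that theorem. The reduction proceeds through two standard implications among synthetic curvature-dimension conditions.

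First, I would invoke the globalization theorem for the reduced curvature-dimension condition in essentially non-branching spaces: under the essentially non-branching, proper and geodesic assumptions, the local condition $\CD_{loc}(K,N)$ upgrades to the global condition $\CD^{*}(K,N)$. This extends the Bacher--Sturm local-to-global theorem from the (strictly) non-branching framework of \cite{BS10} to the essentially non-branching setting. Second, I would use the implication $\CD^{*}(K,N) \Rightarrow \MCP(K,N)$, also proved in \cite{BS10}. At this point $(X,\sfd,\mm)$ is an essentially non-branching m.m.s. verifying $\MCP(K,N)$, i.e. it falls under the hypotheses of Theorem \ref{T:final}.

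Theorem \ref{T:final} then directly gives, for any $\mu_0,\mu_1 \in \P_2(X)$ with $\mu_0 \ll \mm$, the existence and uniqueness of $\nu \in \Opt(\mu_0,\mu_1)$, the fact that $\nu$ is induced by a map $S$, and the absolute continuity $(\ee_t)_\sharp \nu \ll \mm$ for every $t \in [0,1)$. Uniqueness of the $W_2$-optimal transport map is then an immediate consequence of the uniqueness of $\nu$: if two distinct optimal maps $T_1 \neq T_2$ existed, both plans $(\mathrm{Id},T_i)_\sharp \mu_0$ would be optimal, and their half-sum would provide an optimal plan not induced by any map, contradicting the fact that $\nu$ is the only element of $\Opt(\mu_0,\mu_1)$ and is concentrated on the graph of $S$.

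The only nontrivial step is the globalization $\CD_{loc}(K,N) \Rightarrow \CD^{*}(K,N)$ in the essentially non-branching setting; this is the technical heart of the corollary and the only place where a result beyond Theorem \ref{T:final} and the standard chain of implications among curvature-dimension conditions is needed. Once this step is taken, the rest of the argument is a verbatim application of Theorem \ref{T:final}.
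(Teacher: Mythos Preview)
Your reduction has a genuine gap, and in fact a circularity. The globalization step $\CD_{loc}(K,N)\Rightarrow \CD^{*}(K,N)$ that you invoke is \emph{not} available in the essentially non-branching setting via \cite{BS10}: the Bacher--Sturm argument genuinely uses the strict non-branching hypothesis. The extension to essentially non-branching spaces is a substantially harder result (Cavalletti--Milman), and that later work actually \emph{uses} the results of the present paper as an input. More directly, look at the sentence immediately after the proof of this corollary in the paper: it says that Corollary \ref{C:CDloc} is precisely what permits extending \cite{cavasturm:MCP} (i.e.\ the implication $\CD_{loc}\Rightarrow \MCP$) to essentially non-branching spaces. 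So the chain ``$\CD_{loc}\Rightarrow \CD^{*}\Rightarrow \MCP\Rightarrow$ Theorem~\ref{T:final}'' that you propose is not a valid route to this corollary---it presupposes what is being proved.

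The paper avoids globalization entirely. Its argument is local and iterative: restrict to bounded marginals, cover a compact set containing all intermediate supports by the $\CD_{loc}$-neighbourhoods $\{U_i\}$, take a Lebesgue number $\delta$, and then chop any optimal dynamical plan into time-segments of length $\sim \delta/D$. On each segment the two endpoint measures are supported in a single $U_i$, so $\CD(K,N)$ (hence $\MCP(K,N)$) holds there and Theorem~\ref{T:final} applies, yielding a unique map and absolute continuity of the intermediate measure. Iterating finitely many times propagates both properties from $t=0$ to any $t\in[0,1)$, and composing the local maps gives the global optimal map; uniqueness of $\nu$ then follows as in Theorem~\ref{T:2}. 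This is more hands-on than your proposed shortcut, but it stays entirely within the tools developed in the paper.
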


\begin{proof}
Consider any $\pi \in \Pi(\mu_{0},\mu_{1})$ optimal transference plan  and any $\nu \in \Opt(\mu_{0},\mu_{1})$ associated to it; 
let moreover $\mu_{t} = (\ee_{t})_{\sharp} \nu$.
Observe that it is not restrictive to assume both $\mu_{0}$ and $\mu_{1}$ to have bounded support; in particular 
$D : = \diam(\supp(\mu_{0})\cup \supp(\mu_{1}))$ is finite and therefore there exists a compact set $B \subset X$  
such that $\supp(\mu_{t}) \subset B$ for each $t \in [0,1]$. 

From the $\CD_{loc}$ condition we deduce the existence of an open covering of $X$, denoted by $\{ U_{i} \}_{i \in I}$ where $\CD(K,N)$ holds for
marginal measures supported inside the same $U_{i}$; in particular we deduce from Theorem \ref{T:final} the existence and uniqueness of optimal transport maps 
for marginal measures supported inside the same $U_{i}$, provided the first one is absolutely continuous with respect to $\mm$.

Since $B$ is a compact set, from Lebesgue's number Lemma, there exists $\delta> 0$ such that whenever $A \subset B$ has diameter less than $\delta$ then 
it is contained in $U_{i}$, for some $i \in I$.
Now consider any $t \in [0,1)$ and consider the disintegration of $\nu$ with respect to $\ee_{t}$:
$$
\nu = \int \nu_{x}^{t} \, \mu_{t}(dx)
$$
and observe that observe that 
$$
\diam\left( \supp\left( \mu_{t}\llcorner_{B_{\delta/4}(z)} \right) 
\cup \supp\left(  (\ee_{t + \delta/4D} )_{\sharp} \Big( \int_{B_{\delta/4}(z)} \nu_{x}^{t} \, \mu_{t}(dx) \Big)\right)\right) < \delta.
$$
This implies that there exists a unique optimal map from $\mu_{t}$ to $\mu_{t+\delta/4D}$, provided $\mu_{t}$ is absolutely continuous with respect to $\mm$. 
If this is the case, from Theorem \ref{T:final} it also follows that $\mu_{t+\delta/5D}$ is absolutely continuous with respect to $\mm$.

Since $t$ was any number in $[0,1)$, we can start with $t= 0$; since $\mu_{0} \ll \mm$, there exists a unique optimal map $T_{0}$ such 
that $T_{\sharp} \mu_{0} = \mu_{\delta/5D}$ and $\mu_{\delta/5D} \ll \mm$. Repeating the argument finitely many times, it follows the existence of a map $T$ 
such that $\pi = (Id,T)_{\sharp}\mu_{0}$ and the claim follows.
Repeating verbatim the proof of Theorem \ref{T:2}, we obtain that the optimal dynamical plan $\nu$ is unique and it is induced by a map. 
The absolute continuity follows from the covering argument of the first part of the proof.
\end{proof}

We conclude by saying that  Corollary \ref{C:CDloc} permits to extend the result of \cite{cavasturm:MCP} to the framework of essentially non-branching metric measure spaces.
In particular it follows that an essentially non-branching metric measure spaces verifying $\CD_{loc}(K,N)$ also verifies $\MCP(K,N)$; 
hence the claims of Theorem \ref{T:final} are still valid.

\section*{Appendix}

The Ricci curvature condition $\MCP$ was introduced independently in \cite{Ohta1} and \cite{sturm:II}. 
The definition proposed by S.I. Ohta in \cite{Ohta1} goes as follows:
\medskip

A metric measure space $(X,\sfd,\mm)$ is said to satisfy $\MCP(K,N)$ if for any $x \in X$ and $A \subset X$ Borel set with $0 < \mm(A) <\infty$ 
there exists 
$$
\nu \in \Opt(\mm\llcorner_{A}/\mm(A), \delta_{x} )
$$
such that 
$$
\mm \geq (\ee_{t})_{\sharp} \big( \tau_{K,N}^{(1-t)}(\sfd(\gamma_{0},\gamma_{1}))^{N} \mm(A) \nu \big).
$$
That is, using the estimate \eqref{E:sigma}, for any $B \subset X$ it holds
\begin{align*}
\mm(B) \geq &~ \mm(A) \int_{\ee_{t}^{-1}(B)}\tau_{K,N}^{(1-t)}(\sfd(\gamma_{0},\gamma_{1}))^{N} \nu(d\gamma)  \\
\geq &~ \mm(A) (1-t)^{N}e^{ - D t \sqrt{ (N-1)K^{-}}} (\ee_{t})_{\sharp}\nu (B),
\end{align*}
where $D = \diam (A \cup \{x\})$.
In particular this implies that $(\ee_{t})_{\sharp}\nu = \rho_{t} \mm$, for any $t \in[0,1)$ and for $\nu$-a.e. $\gamma$
$$
\rho_{t}(\gamma_{t})  \leq \frac{1}{\mm(A)} (1-t)^{-N}e^{ D t \sqrt{ (N-1)K^{-}}} = \rho_{0}(\gamma_{0}) (1-t)^{-N}e^{ D t \sqrt{ (N-1)K^{-}}} 
$$
that rearranged properly becomes
\begin{equation}\label{E:MCP3}
\rho_{t}(\gamma_{t})^{-1/N} \geq (1-t)e^{ -D t \sqrt{ (N-1)K^{-}}/N} \rho_{0}(\gamma_{0})^{-1/N},
\end{equation}
yielding all the claims of Theorem \ref{T:goodgeo} under the additional assumption that $\mu_{0} = \mm\llcorner_{A}/\mm(A)$. 
The previous calculations show that the claim of Proposition \ref{prop:findelta} can be proved assuming the space to satisfy $\MCP$ version of Ohta 
and essentially non-branching (see in particular {\bf Step 2.} of the proof). 

This permits to approximate any $\mu_{0}$ with simple functions (i.e. finite linear combination of characteristic functions) and, thanks to Proposition \ref{prop:findelta},
to obtain a Wasserstein geodesic connecting the approximation of $\mu_{0}$ to the finite combination of Dirac masses $\mu_{1}$ satisfying the  estimate \eqref{E:MCP3}. 
Since $\eqref{E:MCP3}$ is stable, we directly obtain also Proposition \ref{prop:MCPfindelta}.
 Then one can repeat completely all the rest of the paper using the Ohta's version of $\MCP$ obtaining the same claims. 

As a consequence we also obtain that if $(X,\sfd,\mm)$ is an essentially non-branching metric measure space it satisfies
Ohta's version of $\MCP$ if and only if it satisfies Definition \ref{D:MCP}. 
Indeed we have shown that under the essentially non-branching condition both $\MCP$ definitions can be considered as pointwise conditions on the density of the 
Wasserstein geodesics connecting absolutely continuous measures to a Dirac mass and as pointwise condition they impose the same inequality: 
for $\nu$-a.e. $\gamma$
$$
\rho_{t}(\gamma_{t})^{-1/N} \geq \tau_{K,N}^{(1-t)}(\sfd(\gamma_{0},\gamma_{1})) \rho_{0}(\gamma_{0})^{-1/N},
$$
for every $t \in [0,1)$.
We conclude this part noticing that, by Section 5 of \cite{R2012}, Definition \ref{D:MCP} implies $\MCP$ in the sense of Ohta even without the essential non-branching assumption.

\end{document}